\theoremstyle{plain}
\newtheorem{theorem}{Theorem}[section]
\newtheorem{proposition}[theorem]{Proposition}
\newtheorem{lemma}[theorem]{Lemma}
\newtheorem{corollary}[theorem]{Corollary}
\theoremstyle{definition}
\newtheorem{definition}[theorem]{Definition}
\theoremstyle{remark}
\newtheorem{remark}[theorem]{Remark}
\newtheorem{example}[theorem]{Example}
\DeclareMathOperator{\Endop}{End}
\newcommand{\End}[1]{\Endop\!\left(#1\right)}
\newcommand{\Endp}[1]{\Endop_{+}\!\left(#1\right)}
\newcommand{\Endm}[1]{\Endop_{-}\!\left(#1\right)}
\newcommand{\Ann}{\mathrm{Ann}}
\newcommand{\GL}{\mathrm{GL}}
\newcommand{\SO}{\mathrm{SO}}
\newcommand{\SU}{\mathrm{SU}}
\newcommand{\bbR}{\mathbb{R}}
\newcommand{\bbC}{\mathbb{C}}
\newcommand{\MM}{\mathbf{M}}
\newcommand{\N}{\mathbb{N}}
\newcommand{\frakC}{\mathfrak{C}}
\newcommand{\frakg}{\mathfrak{g}}
\newcommand{\Iso}{\mathrm{I}}
\newcommand{\trace}{\mathrm{tr}}
\newcommand{\Kern}{\mathrm{Kern}}
\newcommand{\ad}{\mathrm{ad}}
\newcommand{\Ad}{\mathrm{Ad}}
\newcommand{\rmG}{\mathrm{G}}
\newcommand{\bbE}{\mathbb{E}}
\newcommand{\scrA}{\mathscr{A}}
\newcommand{\scrF}{\mathscr{F}}
\newcommand{\scrK}{\mathcal{K}}
\newcommand{\scrM}{\mathcal{M}}
\newcommand{\scrR}{\mathcal{R}}
\newcommand{\scrP}{\mathscr{P}}
\newcommand{\Ric}{\mathrm{Ric}}
\renewcommand{\d}{\mathrm{d}}
\newcommand{\scal}{\mathrm{s}}
\newcommand{\rmR}{\mathrm{R}}
\newcommand{\Eval}{\mathrm{Eval}}
\newcommand{\Spin}{\mathbf{Spin}}
\renewcommand{\min}{\mathrm{min}}
\newcommand{\C}{\mathrm{C}}
\newcommand{\symR}{R}
\title{The Minimal Polynomial of a Riemannian \(\frakC_0\)\,-space}
\author{Tillmann Jentsch}
\address{Unidad Cuernavaca del Instituto de Matem\'aticas, Universidad
Nacional Aut\'onoma de M\'exico, Avenida Universidad s/n, Lomas de
Chamilpa, 62210 Cuernavaca, Morelos, MEXICO (Guest visitor)}
\email{tilljentsch@gmail.com}
\subjclass[2020]{Primary 53C30; Secondary 53B20, 53C22, 13A15}
\keywords{minimal polynomial, \(\frakC_0\)-space, G.O.\ space,
Killing tensors, Singer invariant, jet isomorphism}
\begin{document}\sloppy
\begin{abstract}
We construct, at each point of a Riemannian \(\frakC_0\)-space, a
polynomial in one variable whose coefficients are polynomial functions
on the tangent space. For a homogeneous Riemannian \(\frakC_0\)-space
(for instance, a G.O.\ space) these pointwise-defined polynomials
glue together to a global polynomial whose coefficients are Killing tensors
invariant under the full isometry group. Moreover, the degree of this
polynomial provides an upper bound for the Singer invariant of the space.
\end{abstract}

\maketitle
\section{Overview}
\label{se:overview}

Let \(M\) be a smooth manifold equipped with a nondegenerate
symmetric tensor field \(g\) of type \((0,2)\). The pair \((M,g)\) is
called a pseudo-Riemannian (or semi-Riemannian) manifold. For brevity
we will refer to \((M,g)\) as \emph{Riemannian} (allowing indefinite signature);
whenever a positive-definite metric is needed, we will state this explicitly.

A Riemannian manifold \((M,g)\) carries a unique torsion-free metric
connection \(\nabla\), the Levi–Civita connection. Its curvature tensor is defined by
\[
  \rmR(X,Y)Z \;\coloneqq\; \nabla_X \nabla_Y Z \;-\; \nabla_Y \nabla_X Z
  \;-\; \nabla_{[X,Y]} Z
\]
for all \(X,Y,Z \in \Gamma(TM)\), i.e., sections of
the tangent bundle (or simply vector fields).

For \(k \ge 0\), let \(\nabla^k \rmR\) denote the \(k\)th iterated
covariant derivative of \(\rmR\), defined recursively by
\[
  \nabla^0 \rmR \coloneqq \rmR,\qquad
  \nabla^k \rmR \coloneqq
  \nabla(\nabla^{k-1} \rmR)
  \quad (k \ge 1)
\]
via the induced covariant derivative \(\nabla\) acting on tensor fields
in the standard way. For instance, locally symmetric spaces are
characterized by \(\nabla \rmR = 0\). However, in the positive-definite
case, the condition \(\nabla^k \rmR = 0\) for some \(k \ge 2\) does not
yield new examples of compact Riemannian manifolds (and, more generally,
of complete ones; see~\cite{NO}) beyond locally symmetric spaces.

Therefore, motivated by the jet isomorphism theorem (see~\cite{J1}),
we consider instead the symmetrized covariant derivatives of \(\rmR\):
Given \(X \in T_pM\), define an endomorphism
\(\symR^k(X) \in \End{T_pM}\) by
\begin{equation}\label{eq:kth_order_symmetrized_curvature_tensor_1}
  \symR^k(X)Y \coloneqq
  (\nabla^{k}_{X,\ldots,X}\,\rmR)(Y,X)X
  \qquad (Y \in T_pM)
\end{equation}
Then the assignment
\begin{equation}\label{eq:kth_order_symmetrized_curvature_tensor_2}
  \symR^k|_p \colon T_pM \longrightarrow \End{T_pM},\qquad
  X \longmapsto \symR^k|_p(X)
\end{equation}
defines a homogeneous polynomial map of degree \(k+2\) from \(T_pM\) to
\(\End{T_pM}\). We refer to \(\symR^k\) as the
\emph{symmetrized \(k\)th covariant derivative of the curvature tensor}.

Let
\begin{equation}\label{eq:End_plus}
  \Endp{T_pM} \coloneqq
  \{\,A\in\End{T_pM}\mid A^{*}=A\,\}
\end{equation}
denote the space of \(g_p\)-self-adjoint endomorphisms. Then
\(\symR^k|_p(X)\in \Endp{T_pM}\) for all \(k \ge 0\). Indeed, for
\(Y,Z \in T_pM\), pair symmetry (which is preserved under covariant
differentiation) gives
\begin{equation}\label{eq:scrR_is_self_adjoint}
\begin{aligned}
  g_p(\symR^k(X)Y,Z)
  &= (\nabla^{k}_{X,\ldots,X}\,\rmR)(Y,X,X,Z) \\
  &= (\nabla^{k}_{X,\ldots,X}\,\rmR)(Z,X,X,Y) \\
  &= g_p(Y,\symR^k(X)Z)
\end{aligned}
\end{equation}
However, the self-adjointness of \(\symR^k|_p(X)\) will be used only to
prove Theorem~\ref{th:main_addition} and the subsequent corollary.
Otherwise it suffices to work with the larger space \(\End{T_pM}\).

\begin{remark}\label{re:additional_properties}
Besides the fact that \(\symR^k|_p(X)\) is \(g_p\)-self-adjoint for all
\(X \in T_pM\), the symmetrized \(k\)th covariant derivative of the
curvature tensor has additional algebraic properties. Most prominently,
\(\symR^k|_p\) belongs to a distinguished subspace of the tensor space
\(\bigotimes^{k+4} T_pM^*\) corresponding to the Young diagram of shape
\((k+2,2)\); cf.\ \cite[\S2.1]{J1}. However, this will not be used
explicitly at any point in the sequel.
\end{remark}

Let \(\scrP^j(T_pM)\) be the vector space of homogeneous polynomials of
degree \(j\) on \(T_pM\), and let \(\scrP^j\bigl(T_pM;\End{T_pM}\bigr)\)
denote the vector space of homogeneous polynomials of degree \(j\) on \(T_pM\)
with values in \(\End{T_pM}\). As noted above,
\[
  \symR^k|_p \in \scrP^{k+2}(T_pM;\End{T_pM})
\]

For every integer \(k \ge 0\), the {\em symmetrized \(k\)-jet},
\(\symR^{\leq k}|_p \coloneqq (\symR^0|_p,\symR^1|_p,\ldots,\symR^k|_p)\)
of the curvature tensor belongs to the graded vector space
\[
  \scrP(T_pM;\End{T_pM}) = \bigoplus_{j=0}^\infty
  \scrP^j(T_pM;\End{T_pM})
\]
of \(\End{T_pM}\)-valued polynomial functions on \(T_pM\).
This space is a graded module over the graded \(\bbR\)-algebra
\(\scrP(T_pM) = \bigoplus_{i=0}^\infty \scrP^i(T_pM)\) of
real-valued polynomial functions on \(T_pM\).

Let
\begin{equation}\label{eq:eval_1}
  \Eval_p \colon \scrP(T_pM)[\lambda]
  \longrightarrow \scrP(T_pM;\End{T_pM})
\end{equation}
be the unique homomorphism of \(\scrP(T_pM)\)-modules such that
\begin{equation}\label{eq:eval_2}
  \Eval_p(\lambda^i) \coloneqq \symR^i|_p
  \qquad \text{for all } i \ge 0
\end{equation}
If
\[
  P = \sum_{i=0}^k a_i\,\lambda^{k-i}
  \in \scrP(T_pM)[\lambda]
\]
is a nonzero polynomial of degree \(k\), then
\begin{equation*}\
  \Eval_p(P)
  =
  \sum_{i=0}^k a_i\,\symR^{k-i}|_p
\end{equation*}
which lies in \(\scrP(T_pM;\End{T_pM})\).

Let \(P = \sum_{i=0}^k a_i\,\lambda^{k-i} \in \scrP(T_pM)[\lambda]\)
be a polynomial of degree \(k\) in \(\lambda\) with coefficients
\(a_i \in \scrP(T_pM)\). We say that \(P\) is \emph{homogeneous}
if \(a_i \in \scrP^i(T_pM)\) for \(i=0,\ldots,k\),
and \emph{monic} if \(a_0 = 1\). In particular,
\[
 \Eval_p(P) \in \scrP^{k+2}(T_pM;\End{T_pM})
\]

\begin{definition}\label{de:admissible_in_p}
Let \(p \in M\). A monic homogeneous polynomial
\(P = \sum_{i=0}^k a_i\,\lambda^{k-i} \in \scrP(T_pM)[\lambda]\)
is called \emph{admissible} if \(\Eval_p(P) = 0\). Since \(a_0=1\),
this means
\begin{equation}\label{eq:admissible_in_p}
\symR^k|_p = -\sum_{i=1}^k a_i\,\symR^{k-i}|_p
\end{equation}
\end{definition}

\begin{remark}
  In~\cite{JW}, we called~\eqref{eq:admissible_in_p} a “Jacobi relation” because it
  simultaneously encodes linear relations among the covariant
  derivatives of the Jacobi operators along geodesics emanating from
  \(p\).
\end{remark}

The fact that the polynomial ring \(\scrP(T_pM)\) is Noetherian implies that
an admissible polynomial exists at every point of an arbitrary Riemannian manifold;
cf. Lemma~\ref{le:admissible}. Hence, we need to refine the definition.

\begin{definition}\label{de:minimal_polynomial}
Let \(p\in M\) and suppose that a monic homogeneous polynomial
\(P = \sum_{i=0}^k a_i\,\lambda^{k-i}\in\scrP(T_pM)[\lambda]\)
is admissible.
We call \(P\) \emph{minimal} if the set
\begin{equation}\label{eq:U(p)_1}
   U(p)\coloneqq\bigl\{X\in T_pM \,\big|\, \{\symR^j(X)\}_{j=0}^{k-1}
  \text{ is linearly independent in } \End{T_pM}\bigr\}
\end{equation}
is nonempty.
\end{definition}

This set is Zariski open in \(T_pM\); hence, if it is nonempty, it is dense in the
Euclidean topology of \(T_pM\). Consequently, the degree \(k\) and the coefficients
\(a_i\) are determined by the symmetrized \(k\)-jet \(\symR^{\le k}\big|_p \coloneqq
\bigl(\symR^0\big|_p,\ldots,\symR^k\big|_p\bigr)\) of the curvature tensor and are
therefore intrinsic invariants of the \((k+2)\)-jet
of the metric tensor at \(p\). Hence, if a minimal polynomial exists, it is unique.
We denote it by \(P_{\min}(M,g,p)\) and refer to it as the minimal polynomial of \((M,g)\) at \(p\).
Its existence, however, is a nontrivial question and is not \emph{a priori} guaranteed;
indeed, for a generic metric tensor it fails to exist.

\DeclareRobustCommand{\Czero}{\texorpdfstring{\(\frakC_0\)}{C0}}
\subsection{The minimal polynomial of a Riemannian \Czero-space}

Let \(I \subseteq \bbR\) be an open interval of the real numbers, and let
\(\gamma\colon I \to M\) be a geodesic of \((M,g)\).
The Jacobi operator \(\symR_\gamma\) is the endomorphism-valued tensor field along
\(\gamma\) defined by
\begin{equation}\label{eq:Jacobi_operator}
  \symR_\gamma(t) \coloneqq \rmR\bigl(\,\cdot\,,\dot\gamma(t)\bigr)\dot\gamma(t)\colon T_{\gamma(t)}M
  \longrightarrow T_{\gamma(t)}M,\qquad
  X \longmapsto \rmR\bigl(X,\dot\gamma(t)\bigr)\dot\gamma(t)
\end{equation}
cf.\ \cite[Ch.~2.8]{BTV}. We define its first covariant derivative along
\(\gamma\) by
\[
  \symR^{(1)}_\gamma(t) \coloneqq \frac{\nabla}{\d t}\,\symR_\gamma(t)
\]
that is, for every vector field \(Y(t)\) along \(\gamma\),
\begin{equation}\label{eq:Jacobi_operator_abgeleitet}
  \bigl(\symR^{(1)}_\gamma(t)\bigr)Y(t)
  \coloneqq
  \frac{\nabla}{\d t}\bigl(\symR_\gamma(t)Y(t)\bigr)
  -
  \symR_\gamma(t)\,\frac{\nabla}{\d t}Y(t)
\end{equation}
Along a geodesic this is equivalently
\[
  \symR^{(1)}_\gamma(t)
  = (\nabla_{\dot\gamma(t)}\rmR)(\,\cdot\,,\dot\gamma(t))\,\dot\gamma(t)
\]
since \(\frac{\nabla}{\d t}\,\dot\gamma(t) \equiv 0\).

\begin{definition}[{\cite[Ch.~2.9]{BTV}}]\label{de:C_0}
A Riemannian manifold \((M,g)\) is called a \(\frakC_0\)-space if,
for every geodesic \(\gamma\colon I \to M\), there exists a
parallel skew-symmetric endomorphism field \(\C_\gamma\) along \(\gamma\)
such that
\begin{equation}\label{eq:def_C_0}
  \symR^{(1)}_\gamma(t) = [\C_\gamma(t),\,\symR_\gamma(t)]
  \quad\text{for all } t \in I.
\end{equation}
\end{definition}
Here,
\[
  [\C_\gamma(t),\,\symR_\gamma(t)] \coloneqq \C_\gamma(t)\circ\symR_\gamma(t)
  - \symR_\gamma(t)\circ\C_\gamma(t)
\] denotes the usual commutator of endomorphisms.
Also \(\C_\gamma\) being parallel means
\[
  \tfrac{\nabla}{\d t}\C_\gamma(t) \equiv 0
\]

For instance, every G.O.\ space is a \(\frakC_0\)-space.
Recall that a Riemannian manifold \((M,g)\) is called a G.O.\ space if every geodesic is an orbit
of a one-parameter subgroup of \(\Iso(M,g)\) (see \cite[Ch.~2.2]{BTV}).

The symmetrized jet
\((\symR^0|_p,\symR^1|_p,\ldots)\) of the curvature tensor on a
\(\frakC_0\)-space has a decisive algebraic property: the kernel
\(\Kern(\Eval_p)\) of the evaluation map defined by~\eqref{eq:eval_1}
and~\eqref{eq:eval_2} is a principal ideal of \(\scrP(T_pM)[\lambda]\); see
Corollary~\ref{co:C_0_ideal}.\footnote{This is all the more surprising
  because \(\scrP(T_pM)[\lambda]\) is not a principal ideal domain.}
This provides the algebraic half of the proof of the following theorem.

\begin{theorem}\label{th:main}
  Let \((M,g)\) be a \(\frakC_0\)-space. Then the minimal polynomial
  \(P_{\min}(M,g,p)\) exists at every \(p \in M\) and generates the
  kernel of the evaluation map~\eqref{eq:eval_2} as an ideal of
  \(\scrP(T_pM)[\lambda]\).
\end{theorem}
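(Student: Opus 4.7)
The overall plan is to translate the $\frakC_0$ condition into a cyclic algebraic structure on $\{\scrR^j|_p\}$, derive that $\ker \Eval_p$ is in fact an ideal, and then produce $P_{\min}$ via a rank-count plus an integrality argument.

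First, by induction on $k$, using that $\C_\gamma$ is parallel along every geodesic $\gamma$, I would establish the pointwise identity
\[
\scrR^k|_p(X) \;=\; \ad(\C(X))^k\bigl(\scrR|_p(X)\bigr),
\]
where $\C(X) \coloneqq \C_{\gamma_X}(0) \in \so(T_pM)$ is attached to the geodesic $\gamma_X$ with $\dot\gamma_X(0)=X$; in particular $\scrR^{j+1}|_p(X) = \ad(\C(X))\,\scrR^j|_p(X)$ for every $j$, so the sequence $\{\scrR^j|_p(X)\}_{j \ge 0}$ lies in the cyclic $\ad(\C(X))$-invariant subspace $W(X) \subseteq \End{T_pM}$ generated by $\scrR|_p(X)$, of dimension $m(X) \le \dim \End{T_pM}$. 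An immediate consequence: if $P = \sum a_i \lambda^{k-i} \in \ker \Eval_p$, then applying the linear operator $\ad(\C(X))$ to the pointwise identity $\sum a_i(X) \scrR^{k-i}|_p(X) = 0$ yields $\Eval_p(\lambda P) = 0$. Hence $\ker \Eval_p$ is closed under multiplication by $\lambda$ and, being already a $\scrP(T_pM)$-submodule, is an ideal of $\scrP(T_pM)[\lambda]$.

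Next, set $k \coloneqq \max_{X \in T_pM} m(X)$, so $U(p) = \{X : m(X)=k\}$ is Zariski-open and non-empty by definition. On $U(p)$, $\{\scrR^j|_p(X)\}_{j<k}$ is a basis of $W(X)$, and the inclusion $\scrR^k|_p(X) \in W(X)$ yields uniquely determined scalars $\alpha_i(X)$ with
\[
\scrR^k|_p(X) + \sum_{i=1}^{k} \alpha_i(X)\,\scrR^{k-i}|_p(X) = 0.
\]
By Cramer's rule each $\alpha_i$ is a rational function on $T_pM$, regular on $U(p)$; the scaling $\scrR^j|_p(tX) = t^{j+2}\scrR^j|_p(X)$ forces it to be homogeneous of degree $i$.

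The main obstacle is to upgrade $\alpha_i$ from a rational function to an element of $\scrP^i(T_pM)$. Writing globally $q(X) \scrR^k|_p(X) + \sum_i p_i(X) \scrR^{k-i}|_p(X) = 0$ with $\gcd(q, p_1, \ldots, p_k) = 1$ in the UFD $\scrP(T_pM)$: any irreducible factor $f$ of $q$ whose zero locus $V(f)$ met $U(p)$ would, by the linear independence of $\{\scrR^{k-i}|_p(X)\}_{i=1,\ldots,k}$ at such a meeting point together with the Nullstellensatz, force $f \mid p_i$ for every $i$, contradicting coprimality; hence $V(q) \subseteq U(p)^c$. To rule out non-constant $q$, I would combine this with the admissible polynomial $\sum_j b_j \scrR^{N-j}|_p = 0$ from Lemma~\ref{le:admissible_existence}: substituting the iterated recurrence $\scrR^{k+m}|_p(X) = -\sum_i \alpha_i(X) \scrR^{k+m-i}|_p(X)$ (produced by re-applying $\ad(\C(X))$) into that admissible relation and comparing coefficients in the basis on $U(p)$ yields polynomial identities that pin $q$ down to a constant. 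This is the genuine technical step. Once it is in place, $P_{\min} \coloneqq \lambda^k + \sum_i \alpha_i \lambda^{k-i}$ is monic homogeneous with $\Eval_p(P_{\min}) = 0$ on the dense $U(p)$, hence on all of $T_pM$, so $P_{\min}$ is the minimal polynomial.

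Finally, given any $Q \in \ker \Eval_p$, polynomial division by the monic $P_{\min}$ yields $Q = S\,P_{\min} + R$ with $\deg_\lambda R < k$. Since $\ker \Eval_p$ is an ideal, $R \in \ker \Eval_p$. Writing $R = \sum_{j<k} b_j \lambda^j$ with $b_j \in \scrP(T_pM)$, the pointwise identity $\sum_j b_j(X)\scrR^j|_p(X) = 0$ evaluated at $X \in U(p)$ forces $b_j|_{U(p)} \equiv 0$ by the linear independence there, and Zariski density promotes this to $b_j \equiv 0$. Thus $R = 0$ and $Q \in (P_{\min})$, so $P_{\min}$ generates $\ker \Eval_p$ as an ideal.
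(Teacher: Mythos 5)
Your architecture matches the paper's overall strategy: use the $\frakC_0$ structure to show $\scrR^{i+1}|_p(X)=\ad(\C(X))\scrR^i|_p(X)$, deduce that $\Kern(\Eval_p)$ is an ideal (this is exactly Proposition~\ref{p:C_0} plus Lemma~\ref{le:C_0_1} and Corollary~\ref{co:C_0_1}), build $P_{\min}$ with rational coefficients via a Cramer-type argument (Lemma~\ref{le:rational}), and then upgrade to polynomial coefficients. Your final step---polynomial division by the monic $P_{\min}$ to show it generates $\Kern(\Eval_p)$---is in fact cleaner than the paper's Corollary~\ref{co:principal_ideal}, which goes through extension of scalars to $\scrF(T_pM)[\lambda]$ and back.

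The gap is in the integrality step, which you yourself flag as ``the genuine technical step.'' Your coprimality observation shows only $V(q)\subseteq U(p)^c$, which you correctly note does not rule out nonconstant $q$ (and, as stated over $\bbR$, the real Nullstellensatz step needs care). Your proposed fix---substitute the iterated recurrence into the admissible relation of Lemma~\ref{le:admissible_existence} and compare coefficients on $U(p)$---only re-derives that the monic admissible $Q$ is divisible by $P_{\min}$ in $\scrF(T_pM)[\lambda]$; by itself this yields no constraint on the denominators $q$ of the $\alpha_i$, so nothing is ``pinned down.'' The paper closes this gap with Lemma~\ref{le:algebraic_lemma}, a substantial valuation-theoretic argument: for each irreducible $g\in\scrP(T_pM)$ one works in the discrete valuation ring at $g$, and a nested double induction on the $\frakm$-adic filtration produces, assuming $\ord_g(a_i)<0$ for some $i$, a nonzero polynomial $b$ divisible by every power of $g$, a contradiction. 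What you are missing is precisely this (or some replacement for it). There is in fact a shorter replacement that exactly combines the ingredients you name: since $P_{\min}$ and the admissible $Q\in\scrP(T_pM)[\lambda]$ from Lemma~\ref{le:admissible_existence} are both monic and $P_{\min}\mid Q$ in $\scrF(T_pM)[\lambda]$, Gauss's lemma for the UFD $\scrP(T_pM)$ forces $P_{\min}\in\scrP(T_pM)[\lambda]$. But that argument does not appear in your sketch; as written, the integrality step is asserted, not proved.
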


To conclude the proof of Theorem~\ref{th:main}, it remains to show that the unique monic generator of
\(\Kern(\Eval_p)\) satisfies Definition~\ref{de:minimal_polynomial}.
For this, after extending scalars from \(\scrP(T_pM)\) to the field of
rational functions \(\scrF(T_pM)\), one can construct, at each point of
any Riemannian manifold, a monic homogeneous polynomial
\[
  \widetilde P_{\min}(M,g,p) \in \scrF(T_pM)[\lambda]
\]
satisfying all requirements of
Definition~\ref{de:minimal_polynomial}, except that its coefficients
need not lie in \(\scrP(T_pM)\); for a generic Riemannian metric, they
do not. Comparing the two constructions shows that these coefficients
are in fact polynomial and that the resulting monic polynomial
generates \(\Kern(\Eval_p)\).

The minimal polynomial \(P_{\min}(M,g,p)\) of a \(\frakC_0\)-space is hence
uniquely characterized as the monic polynomial of minimal degree with
coefficients in \(\scrP(T_pM)\) such that \(\Eval_p\bigl(P_{\min}(M,g,p)\bigr) = 0\).
Moreover, if \(\Eval_p(Q) = 0\) for some
\(Q \in \scrP(T_pM)[\lambda]\), then \(P_{\min}(M,g,p)\) divides \(Q\) which
justifies the terminology from an algebraic point of view.
In particular, a monic homogeneous \(\widetilde P \in \scrP(T_pM)[\lambda]\)
is admissible if and only if it can be written as
\[
  \widetilde P = Q\,P_{\min}(M,g,p)
\]
for a unique \(Q \in \scrP(T_pM)[\lambda]\).
Because \(P_{\min}(M,g,p)\) and \(\widetilde P\) are monic and homogeneous, so is \(Q\).
In this way admissible polynomials correspond bijectively to monic
homogeneous polynomials.

\subsection{Furher properties of the minimal polynomial of a Riemannian
\(\frakC_0\)-space}
Let \((M,g)\) be a \(\frakC_0\)-space, \(p \in M\) and let
\(P_{\min}(M,g,p) = \sum_{i=0}^k a_i\,\lambda^{k-i}\) be the minimal
polynomial. View \(P_{\min}(M,g,p)\) as a map
\[
  P_{\min}(M,g,p)\colon T_pM \longrightarrow \bbR[\lambda],\qquad
  X \longmapsto P_{\min}(M,g,p)(X) \coloneqq
  \sum_{i=0}^k a_i(X)\,\lambda^{k-i}
\]
Each value \(P_{\min}(M,g,p)(X)\) is a polynomial over \(\bbR\) of the
same degree \(k\), called the \emph{specialization} of \(P_{\min}(M,g,p)\)
at \(X\), i.e., the polynomial in \(\lambda\) obtained by evaluating the
coefficient functions \(a_i\) at \(X\).

\begin{theorem}\label{th:main_addition}
  Let \((M,g)\) be a \(\frakC_0\)-space, \(p \in M\), and let
  \(P_{\min}(M,g,p) = \sum_{i=0}^k a_i\,\lambda^{k-i}\) be the minimal
  polynomial.
  \begin{enumerate}
    \item If \(g\) is positive definite, then all complex roots of the
      specialization \(P_{\min}(M,g,p)(X) \in \bbR[\lambda]\) are purely
      imaginary and simple for every \(X \in U(p)\). 
    \item If \(\Ric|_p \neq 0\), then \(a_k = 0\).
  \end{enumerate}
\end{theorem}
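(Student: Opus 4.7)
The plan is to translate the defining \(\frakC_0\) condition into a pointwise jet-theoretic identity and then derive both assertions by elementary linear algebra. Fix \(X \in T_pM\), let \(\gamma\) denote the geodesic with \(\dot\gamma(0) = X\), and set \(C \coloneqq \C_\gamma(0) \in \End{T_pM}\), which is \(g_p\)-skew-symmetric. In a parallel orthonormal frame along \(\gamma\) the field \(\C_\gamma\) is represented by the constant matrix \(C\), and \(\tfrac{\nabla}{\d t}\) reduces to ordinary differentiation, so the \(\frakC_0\) equation \(\scrR^{(1)}_\gamma = [\C_\gamma,\scrR_\gamma]\) becomes the linear matrix ODE \(M'(t) = [C,M(t)]\) with \(M(0) = \scrR(X)\). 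Its solution \(M(t) = e^{tC}\scrR(X)e^{-tC}\), together with the identification \(\scrR^{(k)}_\gamma(0) = \scrR^k(X)\) (read off in the parallel frame), yields the key identity
\[
\scrR^k(X) \;=\; \ad_C^k\bigl(\scrR(X)\bigr) \qquad (k\ge 0).
\]
Both parts rest on this identity.

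For~(a), write \(A \coloneqq \scrR(X)\) and \(L_C \coloneqq \ad_C \in \End{\End{T_pM}}\). The admissibility relation \(\Eval_p(P_{\min}(M,g,p))(X)=0\) then reads \(p(L_C)(A) = 0\), where \(p(\lambda) \coloneqq P_{\min}(M,g,p)(X) \in \bbR[\lambda]\) is the specialization at \(X\). Thus \(p\) annihilates \(L_C\) on the cyclic submodule \(V \coloneqq \mathrm{span}\{L_C^j(A) : j \ge 0\}\). Because \(X \in U(p)\), the vectors \(A, L_C(A), \ldots, L_C^{k-1}(A)\) are linearly independent and \(L_C^k(A) \in V\) by admissibility, so \(\dim V = k\) and \(p\) coincides with the minimal polynomial of \(L_C|_V\). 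Since \(g\) is positive definite and \(C\) is \(g_p\)-skew-symmetric, \(C\) diagonalizes over \(\bbC\) with purely imaginary spectrum; hence \(L_C\) diagonalizes on \(\End{T_pM}\otimes\bbC\) with eigenvalues the (purely imaginary) differences of eigenvalues of \(C\). The restriction \(L_C|_V\) is therefore still diagonalizable with purely imaginary spectrum, so its minimal polynomial \(p\) has simple, purely imaginary roots.

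For~(b), take the trace of the admissibility relation \(\sum_{i=0}^k a_i(X)\,\scrR^{k-i}(X)=0\). By the identity above, \(\scrR^j(X) = \ad_C^j(\scrR(X))\) is an iterated commutator for \(j \ge 1\), hence \(\tr(\scrR^j(X)) = 0\); for \(j=0\) one has \(\tr(\scrR(X)) = \Ric(X,X)\). Only the last term survives, giving \(a_k(X)\,\Ric(X,X) = 0\) for every \(X \in T_pM\). Equivalently, \(a_k \cdot \Ric(\cdot,\cdot) = 0\) in the integral domain \(\scrP(T_pM)\); since \(\Ric|_p \ne 0\) forces \(\Ric(\cdot,\cdot)\) to be a nonzero polynomial by polarization, we conclude \(a_k = 0\). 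The main technical obstacle is the bookkeeping behind the identity \(\scrR^{(k)}_\gamma = \ad_{\C_\gamma}^k(\scrR_\gamma)\) (proved inductively from the \(\frakC_0\) relation and the parallelism of \(\C_\gamma\)) together with the identification \(\scrR^{(k)}_\gamma(0) = \scrR^k(X)\); once these jet-theoretic steps are secured, both assertions reduce to straightforward linear algebra.
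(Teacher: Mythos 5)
Your proof is correct and follows essentially the same approach as the paper: the conjugation identity \(\scrR^j(X) = \ad_C^j(\scrR(X))\), the identification of the specialized minimal polynomial with the minimal annihilating polynomial of \(\scrR(X)\) for \(\ad_C\) on the cyclic subspace, and the trace computation. The only cosmetic differences are that in (a) you argue spectrally on all of \(\End{T_pM}\) rather than via the inner product on \(\Endp{T_pM}\) as the paper does, and in (b) you derive \(\tr(\scrR^j(X))=0\) for \(j\ge 1\) directly from the commutator structure instead of invoking that the Ricci tensor of a \(\frakC_0\)-space is a Killing tensor; both are equivalent to what the paper does under the hood.
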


\begin{corollary}\label{co:main_addition}
Let \((M,g)\) be a \(\frakC_0\)-space with positive definite metric tensor and let \(p \in M\).
Then the minimal polynomial is determined entirely by its even coefficients:
\begin{equation}\label{eq:min_pol_pos_def}
  P_{\min}(M,g,p) \;\;=\;\; \sum_{i=0}^{\lfloor k/2 \rfloor} a_{2i}\,\lambda^{\,k-2i}
\end{equation}
where \(a_{2i} \ge 0\) and \(a_{2i}\big|_{U(p)} > 0\).
If moreover \(\Ric|_p \neq 0\), then \(k\) is odd.
\end{corollary}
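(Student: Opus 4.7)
The plan is to read off every assertion of the corollary directly from Theorem~\ref{th:main_addition} by combining (i) the factorization over \(\bbR\) of a polynomial all of whose complex roots are purely imaginary and simple with (ii) the Zariski-density of \(U(p)\) in \(T_pM\). First, I fix \(X \in U(p)\) and inspect the specialization \(P_{\min}(M,g,p)(X) \in \bbR[\lambda]\). By part~(a) of Theorem~\ref{th:main_addition} its complex roots are all purely imaginary and simple; reality of the coefficients pairs the nonzero ones into conjugate pairs \(\pm\i\beta\) with \(\beta>0\), simplicity makes these values of \(\beta\) pairwise distinct, and simplicity also forces \(0\) to be a root at most once. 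Hence
\[
  P_{\min}(M,g,p)(X) \;=\; \lambda^{\varepsilon}\prod_{j=1}^m \bigl(\lambda^2 + \beta_j(X)^2\bigr),
\]
with \(\varepsilon\in\{0,1\}\), \(\beta_j(X)>0\) pairwise distinct, and \(k = 2m+\varepsilon\).

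From this factorization both the parity and the sign statements are immediate. Expanding, only powers \(\lambda^{k-2i}\) appear, so \(a_{2i+1}(X) = 0\) for every \(X \in U(p)\); since each \(a_{2i+1}\) is a polynomial function on \(T_pM\) and \(U(p)\) is a nonempty Zariski-open (hence Zariski-dense) subset, this identity extends to all of \(T_pM\), yielding the representation~\eqref{eq:min_pol_pos_def}. On the other hand, \(a_{2i}(X)\) equals the \(i\)-th elementary symmetric polynomial in \(\beta_1(X)^2,\ldots,\beta_m(X)^2\) (times \(1\) or \(\lambda\), depending on \(\varepsilon\)), and is therefore strictly positive for \(X\in U(p)\). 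Since \(a_{2i}\) is continuous on \(T_pM\) and \(U(p)\) is dense in the Euclidean topology, the inequality \(a_{2i}\ge 0\) extends to the whole of \(T_pM\) by passage to limits.

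For the final clause, suppose \(\Ric|_p \neq 0\). Part~(b) of Theorem~\ref{th:main_addition} then gives \(a_k \equiv 0\). If \(k\) were even, say \(k = 2m\), the factorization above would read \(P_{\min}(M,g,p)(X) = \prod_{j=1}^m(\lambda^2+\beta_j(X)^2)\) at every \(X \in U(p)\), and the constant term would be \(a_k(X)=\prod_{j=1}^m\beta_j(X)^2>0\), contradicting \(a_k\equiv 0\) (recall \(U(p)\ne\emptyset\)). Hence \(k\) must be odd. I do not anticipate any real obstacle: once Theorem~\ref{th:main_addition} is in hand the argument reduces to an exercise on real polynomials whose roots are purely imaginary and simple, combined with the standard fact that a polynomial function vanishing on a nonempty Zariski-open subset of \(T_pM\) vanishes identically.
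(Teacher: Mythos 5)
Your argument is correct and follows essentially the same route as the paper: factor the specialization \(P_{\min}(M,g,p)(X)\) for \(X\in U(p)\) as \(\lambda^{\varepsilon}\prod_j(\lambda^2+\beta_j(X)^2)\) using Theorem~\ref{th:main_addition}(a), read off the coefficients as elementary symmetric polynomials (positive on \(U(p)\)), extend the vanishing of the odd coefficients to all of \(T_pM\) by density of \(U(p)\), and deduce the parity of \(k\) from Theorem~\ref{th:main_addition}(b). You are in fact a bit more explicit than the paper about the Zariski-density step for the odd coefficients and the limiting argument giving \(a_{2i}\ge 0\) on all of \(T_pM\), but the underlying decomposition and logic are the same.
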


The proofs of Theorem~\ref{th:main_addition} and
Corollary~\ref{co:main_addition} are given in
Section~\ref{se:proof_of_theorem_main_addition}.

\subsection{Global results}
The concept of admissibility extends naturally to the global setting by considering
smooth sections of the bundle \(\scrP(M) = \bigoplus_{i=0}^\infty
\scrP^i(M) \to M\), whose fiber at \(p \in M\) is the graded algebra
\(\scrP(T_pM)\).
We refer to such sections as \emph{polynomial tensors}; via polarization, they correspond
bijectively to symmetric tensors.

Consider the polynomial algebra \(\Gamma\bigl(\scrP(M)\bigr)[\lambda]\)
in one variable \(\lambda\) over the graded algebra \(\Gamma\bigl(\scrP(M)\bigr)\).
We say that \(P \in \Gamma\bigl(\scrP(M)\bigr)[\lambda]\)
is \emph{monic homogeneous of degree \(k\)} if it can be written
\[
  P = \sum_{i=0}^k a_i\,\lambda^{k-i}, \qquad
  a_i \in \Gamma\bigl(\scrP^i(M)\bigr),\ \ a_0 = 1
\]
Then for each \(p \in M\),
\[
  P|_p = \sum_{i=0}^k a_i|_p\,\lambda^{k-i} \in \scrP(T_pM)[\lambda]
\]
is monic and homogeneous of degree \(k\).

\begin{definition}\label{de:admissible}
Let \((M,g)\) be a Riemannian manifold and \(P \in \Gamma\bigl(\scrP(M)\bigr)[\lambda]\)
monic homogeneous.
We call \(P\) \emph{admissible} if \(P|_p\) is admissible for each \(p \in M\).
\end{definition}

In Section~\ref{se:JR} we shall prove the following existence result.

\begin{theorem}\label{th:globally_admissible}
If \((M,g)\) is compact and real-analytic,
then there exists \(P \in \Gamma\bigl(\scrP(M)\bigr)[\lambda]\) that is admissible.
\end{theorem}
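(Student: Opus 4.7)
The plan is to combine a local existence result obtained from Noetherianity of the real-analytic germ ring with a global smooth gluing argument based on compactness. The statement demands only smooth coefficients, which is essential because real-analytic partitions of unity do not exist in general.

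\emph{Local step.} Fix \(p \in M\) and work in real-analytic coordinates near \(p\). Trivializing \(\scrP(M)\), the graded ring of germs of real-analytic sections of \(\scrP(M)\) at \(p\) is identified with \(\mathcal{O}_{M,p}[\xi_1,\ldots,\xi_n]\), where \(\mathcal{O}_{M,p}\) is the Noetherian local ring of real-analytic function germs at \(p\) and \(\xi_1,\ldots,\xi_n\) are linear coordinates on \(T_pM\). By Hilbert's basis theorem this polynomial ring is Noetherian, and the graded module of germs of real-analytic sections of the bundle whose fiber at \(q\) is \(\scrP(T_qM;\End{T_qM})\) is finitely generated over it. Hence the graded submodule generated by the homogeneous germs \(\scrR^0, \scrR^1, \ldots\) is finitely generated. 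A standard Noetherian-chain argument shows this submodule is already generated by an initial segment \(\scrR^0, \ldots, \scrR^{K(p)-1}\) for some \(K(p) \ge 0\); consequently, for each \(k \ge K(p)\), the germ of \(\scrR^k\) is an \(\mathcal{O}_{M,p}[\xi_1,\ldots,\xi_n]\)-combination of \(\scrR^0,\ldots,\scrR^{k-1}\), and by homogeneity the coefficients can be chosen as polynomials of the appropriate degrees on \(T_pM\). This yields a local admissible polynomial of degree \(k\) with real-analytic coefficients on a neighborhood \(U_p\) of \(p\).

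\emph{Global step.} By compactness, pick a finite cover \(U_{p_1},\ldots,U_{p_N}\) by such neighborhoods and set \(K \coloneqq \max_\alpha K(p_\alpha)\). On each (possibly shrunken) \(U_{p_\alpha}\) the local step furnishes an admissible polynomial \(P_\alpha = \lambda^K + \sum_{i=1}^K a_i^{(\alpha)}\lambda^{K-i}\) of degree \(K\). Let \(\{\rho_\alpha\}\) be a smooth partition of unity subordinate to this cover, extend each \(\rho_\alpha a_i^{(\alpha)}\) by zero outside \(U_{p_\alpha}\), and set
\[
a_i \coloneqq \sum_\alpha \rho_\alpha\,a_i^{(\alpha)} \in \Gamma(\scrP^i(M)), \qquad P \coloneqq \lambda^K + \sum_{i=1}^K a_i\,\lambda^{K-i}.
\]
Then \(P \in \Gamma(\scrP(M))[\lambda]\) is monic (since \(\sum_\alpha \rho_\alpha \equiv 1\)) and homogeneous by construction. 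Using the \(\scrP(T_pM)\)-linearity of \(\Eval_p\),
\[
\Eval_p(P) = \sum_\alpha \rho_\alpha(p)\,\Bigl(\scrR^K|_p + \sum_{i=1}^K a_i^{(\alpha)}|_p\,\scrR^{K-i}|_p\Bigr);
\]
each bracketed term vanishes on \(\mathrm{supp}(\rho_\alpha)\) by admissibility of \(P_\alpha\), so \(\Eval_p(P) = 0\) for every \(p \in M\), proving that \(P\) is admissible.

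\emph{Main obstacle.} The decisive ingredient is the local step: Noetherianity of \(\mathcal{O}_{M,p}\) fails for germs of smooth functions, which is precisely why the hypothesis of real-analyticity is indispensable. A secondary subtlety is verifying that the submodule is generated by an initial segment of the sequence \(\scrR^0,\scrR^1,\ldots\); this follows from Noetherianity applied to the ascending chain of submodules generated by \(\scrR^0,\ldots,\scrR^{k-1}\) as \(k\) grows. The compactness and partition-of-unity arguments are then routine.
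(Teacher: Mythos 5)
Your overall strategy---Noetherianity of analytic germs for local existence, compactness for a finite cover, and a smooth partition of unity to glue---matches the paper's. The local step and the final gluing computation are essentially identical to Corollary~\ref{co:JR_loc_analytic} and the end of the paper's proof. However, there is a genuine gap in the middle, at the point where you normalize all the local admissible polynomials to a common degree \(K\).

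You set \(K \coloneqq \max_\alpha K(p_\alpha)\) \emph{after} choosing the finite cover \(\{U_{p_\alpha}\}\), and then invoke the local Noetherian argument to produce a degree-\(K\) admissible polynomial on ``each (possibly shrunken) \(U_{p_\alpha}\)''. This silently loses the covering property. The germ argument at \(p_\alpha\) does give a degree-\(K\) relation, but it holds only on \emph{some} neighborhood \(V_\alpha\) of \(p_\alpha\), which may be much smaller than \(U_{p_\alpha}\). The resulting family \(\{V_\alpha\}\) is just a finite collection of neighborhoods of the finitely many points \(p_1,\dotsc,p_N\); nothing forces it to cover the compact manifold \(M\). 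Your partition of unity is then taken ``subordinate to this cover'', but the shrunken family need no longer be a cover, so the gluing step is not justified.

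The paper closes exactly this gap with a separate idea you are missing: starting from the degree-\(k_j\) relation on \(U_j\) from Corollary~\ref{co:JR_loc_analytic}, one covariantly differentiates and symmetrizes to obtain an admissible polynomial of degree \(k_j+1\) on the \emph{same} open set \(U_j\), and iterates to reach \(k_{\max}\) without ever shrinking the neighborhood. An alternative fix within your own framework is to reverse the order of quantifiers: observe that \(p\mapsto K(p)\) is upper semicontinuous (a degree-\(n\) relation on a neighborhood of \(p\) restricts to one near every nearby \(q\)), hence bounded on the compact \(M\) by some \(K^*\); then, for this fixed \(K^*\), choose the cover so that each member carries a degree-\(K^*\) relation, and only then glue. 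As written, though, your argument does not establish that the shrunken sets still cover \(M\), and the proof does not go through.
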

The proof uses the fact that, like the polynomial ring,
the ring of convergent power series is also Noetherian~\cite{ZS}.
We emphasize, however, that the mere existence of an admissible polynomial imposes few
geometric restrictions.

By contrast, an admissible \(P \in \Gamma\bigl(\scrP(M)\bigr)[\lambda]\) that is minimal at
every point is unique (if it exists). In this case we write \(P_{\min}(M,g) \coloneqq P\)
and call it the minimal polynomial of \((M,g)\). For a homogeneous Riemannian space
the pointwise and global notions coincide: if \(P_{\min}(M,g,p)\) exists at some \(p \in M\),
then it extends uniquely to \(P_{\min}(M,g) \in \Gamma\bigl(\scrP(M)\bigr)[\lambda]\) by
the action of the isometry group \(\Iso(M,g)\).

For a \(\frakC_0\)-space \((M,g)\) set
\[
  U(M) \coloneqq \{\, p \in M \mid \deg\bigl(P_{\min}(M,g,p)\bigr) \text{ attains its maximal value} \,\}
\]
Let \(m \coloneqq \dim(M)\).
Since \(P_{\min}(M,g,p)\) exists at each point of \(M\) by Theorem~\ref{th:main}, and its
degree \(k\) is bounded by \(m^2\), the dimension of \(\End{T_pM}\)\footnote{In fact, the degree is bounded by
\(\tfrac{m(m + 1)}{2}\), the dimension of the space of symmetric endomorphisms,
cf.~Remark~\ref{re:additional_properties}.}, according to
\eqref{eq:U(p)_1}, this set is nonempty.
It is also open and \(\Iso(M,g)\)-invariant; see again~\eqref{eq:U(p)_1}.
In particular, \(U(M) = M\) if \((M,g)\) is homogeneous.

Recall that a polynomial tensor \(a\) (more precisely, the corresponding symmetric tensor)
is called a \emph{Killing tensor} if \(a(\dot\gamma)\) remains
constant along every geodesic \(\gamma\) of \((M,g)\) (cf.\ \cite{HMS}).

\begin{theorem}\label{th:main_global}
Let \((M,g)\) be a \(\frakC_0\)-space, and let \(U(M) \subseteq M\) be the
set of points where \(\deg(P_{\min}(M,g,p))\) attains its maximal value.
Then the following hold:
\begin{enumerate}
  \item The minimal polynomial \(P_{\min}(U(M),g|_{U(M)})\)
    exists, and its coefficients \(a_i \in \Gamma(\scrP^i(U(M)))\)
    are \(\Iso(M,g)\)-invariant Killing tensors of degree \(i\) on \((U(M),\,g|_{U(M)})\).
  \item If \((M,g)\) is real-analytic and \(M\) is simply connected,
    then the coefficients \(a_i\) extend to \(\Iso(M,g)\)-invariant
    Killing tensors on \((M,g)\).
\end{enumerate}
\end{theorem}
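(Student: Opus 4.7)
On $U(M)$ the degree $k := \deg(P_{\min}(M,g,p))$ is constant by the very definition of $U(M)$. For smoothness of the coefficients I would argue locally: given $p_0 \in U(M)$, pick $X_0 \in U(p_0)$ and transport it smoothly to nearby points via a local trivialization of $TM$; openness of the linear independence condition~\eqref{eq:U(p)_1} guarantees that the transported vector still lies in $U(p)$ for $p$ near $p_0$. Plugging sufficiently many such vectors (or, equivalently, using polarization) into the admissibility identity yields a linear system that determines $a_i|_p$ uniquely and depends smoothly on $p$, so the $a_i$ assemble into smooth sections of $\scrP^i(U(M))$. The $\Iso(M,g)$-invariance follows from the uniqueness of $P_{\min}$ in Theorem~\ref{th:main}: an isometry $\phi$ intertwines $\scrR^j|_p$ with $\scrR^j|_{\phi(p)}$, hence sends an admissible polynomial at $p$ to one at $\phi(p)$ of the same degree, and uniqueness forces $\phi^* a_i = a_i$.

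The core work is showing the $a_i$ are Killing tensors. Fix $p \in U(M)$, $X \in U(p)$, and let $\gamma$ be the geodesic with $\dot\gamma(0) = X$. Iterating $\scrR^{(1)}_\gamma = [\C_\gamma,\scrR_\gamma]$ and using the parallelism of $\C_\gamma$ gives $\scrR^{(j)}_\gamma(t) = \ad(\C_\gamma(t))^j\,\scrR_\gamma(t)$ by induction. Let $P(t)\colon T_pM \to T_{\gamma(t)}M$ be parallel transport, set $C_0 := \C_\gamma(0)$, and denote by $\tilde{\scrR}(t) := P(t)^{-1}\scrR_\gamma(t)P(t) \in \End{T_pM}$ the pull-back of the Jacobi operator. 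Since $\C_\gamma$ is parallel, $P(t)^{-1}\C_\gamma(t)P(t) = C_0$; the identity above translates into $\tfrac{\d}{\d t}\tilde{\scrR} = \ad(C_0)\,\tilde{\scrR}$ and $P(t)^{-1}\scrR^{(j)}_\gamma(t)P(t) = \ad(C_0)^j\,\tilde{\scrR}(t)$, so $\tilde{\scrR}(t) = \exp(t\,\ad C_0)\,\tilde{\scrR}(0)$. The admissibility at $p$ reads $\sum_{i=0}^k a_i|_p(X)\,\ad(C_0)^{k-i}\,\tilde{\scrR}(0) = 0$; since $\exp(t\,\ad C_0)$ commutes with polynomials in $\ad C_0$, applying it and conjugating back to $T_{\gamma(t)}M$ yields
\[
  \sum_{i=0}^k a_i|_p(X)\,\scrR^{k-i}|_{\gamma(t)}(\dot\gamma(t)) \;=\; 0
\]
for every $t$ in the domain of $\gamma$. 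Comparing with the admissibility of $P_{\min}(M,g,\gamma(t))$ at $\dot\gamma(t)$, and using that $\dot\gamma(t) \in U(\gamma(t))$ on an open neighborhood of $0$, the uniqueness of coefficients in an admissible expansion forces $a_i|_{\gamma(t)}(\dot\gamma(t)) = a_i|_p(X)$ on that neighborhood. Differentiating at $t = 0$ gives $(\nabla_X a_i)(X,\ldots,X) = 0$ for all $X \in U(p)$; this is a polynomial identity in $X$, hence extends to all $X \in T_pM$ by Zariski density of $U(p)$, which is precisely the Killing equation for $a_i$.

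\textbf{Part~(2).} In the real analytic setting the $a_i$ on $U(M)$ are real analytic, since they are obtained from a linear system whose data are the real analytic symmetrized jet of the curvature. Real analytic Killing tensors satisfy an overdetermined linear system of PDEs with real analytic coefficients, so (precisely as for Killing vector fields) any local solution admits unique real analytic continuation along paths in $M$; simple connectedness of $M$ makes the continuation path-independent, producing global Killing tensors $\tilde a_i \in \Gamma(\scrP^i(M))$. The $\Iso(M,g)$-invariance established on $U(M)$ propagates to $M$ by uniqueness: $\phi^*\tilde a_i$ is also a real analytic Killing tensor agreeing with $\tilde a_i$ on $U(M)$, hence equals $\tilde a_i$ everywhere.

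The main obstacle is the Killing argument in part~(1): one must propagate the admissibility relation at $p$, which is a priori only an algebraic identity among the endomorphisms $\scrR^j|_p(X)$, into a pointwise relation along the entire geodesic. This is precisely what the $\frakC_0$-hypothesis is designed for, via the exponential propagation $\tilde{\scrR}(t) = \exp(t\,\ad C_0)\,\tilde{\scrR}(0)$; the uniqueness of admissible coefficients then upgrades this into constancy of $a_i|_{\gamma(t)}(\dot\gamma(t))$ on an open $t$-interval, and a polynomial density argument on $T_pM$ converts the resulting pointwise identity at $t=0$ into the full Killing equation.
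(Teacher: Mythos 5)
Your proof is correct and follows essentially the same strategy as the paper: smoothness via the Cramer-type local resolution, the Killing property via the conjugation formula $\tilde{\scrR}(t)=\exp(t\,\ad C_0)\,\tilde{\scrR}(0)$ (which is the content of Proposition~\ref{p:C_0_space}) combined with uniqueness/linear independence to get constancy of $a_i(\dot\gamma(t))$, and then density of $U(p)$ plus the finite-type (prolongation/analytic continuation) argument for part~(2).
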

In particular, on any homogeneous \(\frakC_0\)-space \((M,g)\) (for example, a G.O.\ space),
each coefficient of the minimal polynomial \(P_{\min}(M,g)\) is a Killing tensor
invariant under the full isometry group.
Recall also that every \(\frakC_0\)-space with positive definite metric is real-analytic
(see \cite[Cor.\ 3]{BV}).

The proof of Theorem~\ref{th:main_global} is given in Section~\ref{se:further_properties}.
\subsection{Examples}
Combining the previous results, we see that every homogeneous \(\frakC_0\)-space
(including every G.O.\ space and hence, in particular, every naturally reductive space)
with a positive definite metric tensor carries a nontrivial collection of intrinsically defined,
\(\Iso(M,g)\)-invariant Killing tensors, namely the even coefficients \(a_{2i}\) of the
minimal polynomial \(P_{\min}(M,g)\), which satisfy the pointwise positivity
assertions of Corollary~\ref{co:main_addition}.

At first glance, this may seem unremarkable: the \(a_{2i}\) could simply be multiples of powers of
\(\|\,\cdot\,\|^2\), the squared norm induced by the metric tensor, \(a_{2i}(X) = c_i \|X\|^{2i}\)
with \(c_i \in \bbR_{>0}\).
Indeed, there are several compact homogeneous spaces that have minimal
polynomials with exactly this property:

\begin{example}[Main theorem in \cite{JW}]\label{ex:LJR}
On the following naturally reductive spaces, the even coefficients of the minimal polynomials are
real multiples of powers of \(\|\,\cdot\,\|^2\):
\begin{enumerate}
  \item On the \((2n+1)\)-dimensional total space \(\hat\MM^n(\kappa,\,r)\) of
      the Hopf fibration (with circle fibers of radius \(r\)) over a complex
      space form \(\MM^n(\kappa)\) of holomorphic sectional curvature
      \(\kappa \neq 0\), equipped with the Berger metric, the minimal
      polynomial is
      \begin{equation}\label{eq:LJR_BS}
        \lambda^3 \;+\; c^2\,\|\,\cdot\,\|^2\,\lambda
      \end{equation}
      where \(c^2 \coloneqq \tfrac14\,r^2\,\kappa^2\).
      Moreover, for each \(c > 0\), this same polynomial \eqref{eq:LJR_BS}
      is the minimal polynomial of the \((2n+1)\)-dimensional Heisenberg
      group \(\hat\MM^n(0,\,1/c^2)\), equipped with the multiple
      \(\tfrac{1}{c^2}g\) of the canonical left-invariant metric \(g\)
      of type~H.

  \item On a \(7\)-dimensional homogeneous nearly parallel \(\rmG_2\)-space of scalar curvature
        \(\scal\), the minimal polynomial is
        \begin{equation}\label{eq:LJR_NP}
          \lambda^3 \;+\; \tfrac{2\,\scal}{189}\,\|\,\cdot\,\|^2\,\lambda
        \end{equation}
  \item On a \(6\)-dimensional homogeneous strict nearly K\"ahler manifold of scalar curvature
        \(\scal\), the minimal polynomial is
        \begin{equation}\label{eq:LJR_NK}
          \lambda^5 \;+\; \tfrac{\scal}{24}\,\|\,\cdot\,\|^2\,\lambda^3
          \;+\; \tfrac{\scal^2}{3600}\,\|\,\cdot\,\|^4\,\lambda
        \end{equation}
\end{enumerate}
\end{example}

However, further examples seem unlikely.
The calculations for generalized Heisenberg groups~\cite{J2} support this conjecture.

\begin{remark}
Berger spheres and the compact simply connected homogeneous nearly parallel
\(\rmG_2\)-manifolds are examples of \(7\)-dimensional normal homogeneous
spaces with positive sectional curvature.
By Example~\ref{ex:LJR}, the choices \(r = \tfrac{4}{\kappa}\) and
\(\scal = 378\) yield
\begin{equation}\label{eq:kappa_=_4_or_scal_=_378}
  \symR^3(X) + 4\|X\|^2\,\symR^1(X) = 0
\end{equation}

On the other hand, for each integer \(p \ge 1\) Gallot introduces a partial
differential equation \((E_p)\) for a function \(f \colon M \to \bbR\) and
proves that, if \((M,g)\) is complete and \((E_p)\) admits a nonconstant
solution, then the universal cover of \(M\) is isometric to the round sphere
\(\bigl(S^n,\mathrm{can}\bigr)\); see \cite[Proposition~4.1]{Ga}.
For \(p \coloneqq 2\), Gallot's recursion yields \(\alpha_3(1) = 4\), and
\((E_2)\) can be written as
\[
  S^{3}f(X) \coloneqq \nabla^3_{X,X,X}f
  + \alpha_3(1)\|X\|^2\nabla_X f = 0
\]
see \cite[p.~248]{Ga} and \cite[Lemme~4.4]{Ga}.

Similarly, on a \(6\)-dimensional compact homogeneous strict nearly K\"ahler
manifold with scalar curvature normalized to \(\scal = 480\), we obtain
\begin{equation}\label{eq:scal_=_480}
  \symR^5(X) + 20\|X\|^2\,\symR^3(X) + 64\|X\|^4\,\symR^1(X) = 0
\end{equation}
Even more striking, for \(p \coloneqq 4\) the integers \(\alpha_5(1) = 20\)
and \(\alpha_5(2) = 64\) appear in
\[
  S^{5}f(X) \coloneqq \nabla^5_{X,\ldots,X}f
  + \alpha_5(1)\|X\|^2\,\nabla^3_{X,X,X}f
  + \alpha_5(2)\|X\|^4\,\nabla_X f = 0
\]

It is known that the Riemannian cone \((\bar M,\,\bar g)\) over a
\(6\)-dimensional nearly K\"ahler manifold \((M^6,g)\) has holonomy contained
in \(\rmG_2\) precisely when \(\scal = n(n-1) = 30\) for \(n = 6\) (and the
holonomy is \(\rmG_2\) unless the cone is flat).
Likewise, the cone over a \(7\)-dimensional nearly parallel \(\rmG_2\)-space
\((M^7,g)\) has holonomy contained in \(\Spin(7)\) precisely when
\(\scal = n(n-1) = 42\) for \(n = 7\) (and the holonomy is \(\Spin(7)\) unless
the cone is flat).
In this normalization, any real Killing spinor satisfies
\(\nabla^{\Spin}_X \psi = \beta\,X \cdot \psi\) with \(|\beta| = \tfrac12\)
(cf.\ \cite{Ba}).
Therefore, if \(g\) is scaled so that \eqref{eq:kappa_=_4_or_scal_=_378} or
\eqref{eq:scal_=_480} holds, then \(|\beta| = \tfrac32\) (for nearly parallel
\(\rmG_2\)-spaces) or \(|\beta| = 2\) (for nearly K\"ahler manifolds), by the
scaling law \(g \mapsto c^2g\), which implies \(\beta \mapsto \beta/c\) for
any \(c > 0\) (cf.\ \cite[Thm.~8]{BFGK}).
In particular, with these scalings, the canonical cone metrics described
in~\cite{Ba} are not recovered.
These phenomena are still lacking a satisfactory explanation.
\end{remark}

\subsection{Estimates for the Singer invariant of a \(\frakC_0\)-space}
Let \((M,g)\) be a Riemannian manifold and let \(p \in M\).
Define a descending sequence of Lie subalgebras
\begin{equation}\label{eq:Lie_algebra_k}
  \frakg(j) \coloneqq \{\, A \in \Endm{T_pM} \mid
  A \cdot \rmR|_p = 0,\; A \cdot \nabla \rmR|_p = 0,\; \ldots,\;
  A \cdot \nabla^j \rmR|_p = 0 \,\}
\end{equation}
where \(\Endm{T_pM}\) is the space of skew-symmetric endomorphisms of \(T_pM\),
and \(A \cdot\) denotes the infinitesimal action of \(A\) on tensors, that is,
the derivation obtained by differentiating at the identity of the natural
\(\SO(T_pM)\)-action by pullback on tensors of mixed type. It follows from the definition that
\(\frakg(j+1) \subseteq \frakg(j)\).

\begin{definition}
Following \cite{GiNi,KT,NTr}, the \emph{Singer invariant} at \(p\) is the
smallest integer \(k_{\mathrm{Singer}} \ge 0\) such that
\[
  \frakg(k_{\mathrm{Singer}})
  = \frakg(k_{\mathrm{Singer}}+1)
\]
\end{definition}

\begin{theorem}\label{th:Singer_invariant}
Let \((M,g)\) be a \(\frakC_0\)-space and let \(p \in M\).
Let \(k_{\mathrm{Singer}}\) be the Singer invariant at \(p\) and let \(k\) be
the degree of \(P_{\min}(M,g,p)\).
Then
\[
  k_{\mathrm{Singer}} \le k
\]
\end{theorem}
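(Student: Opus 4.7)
The plan is to establish the stronger statement \(\frakg(k) = \frakg(k+1)\), which immediately yields \(k_{\mathrm{Singer}} \le k\) since \(\frakg(j)\) is monotone decreasing in \(j\). Fix \(A \in \frakg(k)\); the goal is to show \(A \cdot \nabla^{k+1}\rmR|_p = 0\). I would first translate the hypothesis into the symmetrized setting: the jet isomorphism theorem of~\cite{J1} provides an \(\rmO(T_pM)\)-equivariant correspondence between the \(k\)-jet of the curvature tensor (modulo its algebraic symmetries) and the symmetrized \(k\)-jet \(\scrR^{\le k}|_p\). Hence \(A \cdot \nabla^j\rmR|_p = 0\) and \(A \cdot \scrR^j|_p = 0\) are equivalent at each order, and it suffices to prove \(A \cdot \scrR^{k+1}|_p = 0\) under the assumption \(A \cdot \scrR^j|_p = 0\) for \(j \le k\).

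The algebraic core of the argument is the following. Let \(P_{\min}(M,g,p) = \sum_{i=0}^k a_i\,\lambda^{k-i}\), which exists by Theorem~\ref{th:main}. Since \(A\) acts as a derivation on the \(\scrP(T_pM)\)-module \(\scrP(T_pM;\End{T_pM})\), applying \(A\) to the admissibility identity \(\sum_{i=0}^k a_i\,\scrR^{k-i}|_p = 0\) and using \(A \cdot \scrR^{k-i}|_p = 0\) for every \(i = 0,\ldots,k\) yields
\[
 \sum_{i=0}^k (A \cdot a_i)\,\scrR^{k-i}|_p = 0
\]
Set \(P^A \coloneqq \sum_{i=0}^k (A \cdot a_i)\,\lambda^{k-i} \in \scrP(T_pM)[\lambda]\). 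Because \(a_0 = 1\) forces \(A \cdot a_0 = 0\), one has \(\deg_\lambda(P^A) \le k - 1\); on the other hand, \(P^A \in \Kern(\Eval_p)\). By Corollary~\ref{co:principal_ideal} this kernel is the principal ideal generated by the monic polynomial \(P_{\min}(M,g,p)\), so any nonzero element has \(\lambda\)-degree at least \(k\). Therefore \(P^A = 0\), that is, \(A \cdot a_i = 0\) for every \(i\).

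To finish, I would evaluate the shifted polynomial \(\lambda \cdot P_{\min}(M,g,p) \in \Kern(\Eval_p)\) to obtain
\[
 \scrR^{k+1}|_p + \sum_{i=1}^k a_i\,\scrR^{k+1-i}|_p = 0
\]
Acting by \(A\) and invoking both \(A \cdot a_i = 0\) and \(A \cdot \scrR^{k+1-i}|_p = 0\) for \(i = 1,\ldots,k\) (since the indices \(k+1-i\) lie in \(\{1,\ldots,k\}\)), every term but \(A \cdot \scrR^{k+1}|_p\) vanishes, yielding \(A \cdot \scrR^{k+1}|_p = 0\). Applying the jet isomorphism once more at order \(k+1\) gives \(A \cdot \nabla^{k+1}\rmR|_p = 0\), so \(A \in \frakg(k+1)\), completing the proof. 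The most delicate step I expect is verifying that the jet isomorphism of~\cite{J1} has exactly the equivariance needed for the two-way translation between the full and symmetrized covariant derivatives under the action of skew-symmetric endomorphisms; once this is in place, the remainder reduces to a short divisibility observation in \(\scrP(T_pM)[\lambda]\) resting on the fact that the principal ideal \((P_{\min}(M,g,p))\) contains no nonzero element of \(\lambda\)-degree less than \(k\).
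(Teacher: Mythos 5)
Your proof is correct, and it is a sound infinitesimal rendering of the paper's argument. The paper works at the group level: Lemma~\ref{le:Sin_inv} shows that each coefficient $a_i$ of $P_{\min}$ is fixed by the stabilizer $G(\scrR^{\le k}) \subseteq \GL(T_pM)$ (via uniqueness of the minimal polynomial), Corollary~\ref{co:Sin_inv} then uses $\lambda P_{\min} \in \Kern(\Eval_p)$ to conclude $G(\scrR^{\le k}) = G(\scrR^{\le k+1})$, and the proof of the theorem passes to $\SO(T_pM)$ and takes Lie algebras. You instead work directly with $A \in \frakg(k)$, exploit that $A$ acts as a derivation on the $\scrP(T_pM)$-module $\scrP(T_pM;\End{T_pM})$ to differentiate the admissibility identity, and deduce $A \cdot a_i = 0$ from the degree bound $\deg_\lambda P^A \le k-1$ together with the principal ideal description of $\Kern(\Eval_p)$ from Corollary~\ref{co:principal_ideal}. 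This replaces the uniqueness argument of Lemma~\ref{le:Sin_inv} by a slightly different (but equally valid) divisibility observation, and it dispenses with the group-level detour, which is arguably cleaner given that the Singer invariant is defined infinitesimally via the $\frakg(j)$. Both proofs hinge on the same three ingredients: the ideal property of $\Kern(\Eval_p)$ (Corollary~\ref{co:C_0_1}), the principal generation by $P_{\min}$, and the jet isomorphism theorem to pass between stabilizers of $\nabla^{\le j}\rmR|_p$ and of $\scrR^{\le j}|_p$. Your closing caveat about equivariance of the jet isomorphism is exactly the point the paper addresses by citing \cite[Theorem~3]{J1}; no additional verification is required beyond what the paper already invokes.
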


\subsection{Concluding remarks}

It would be valuable to have more concrete examples illustrating how
the assertions of Theorem~\ref{th:main} manifest themselves in
practice. Apart from G.O.\ spaces and generalized Heisenberg groups,
no other \(\frakC_0\)-spaces are known. Thus far, all such examples
are homogeneous. 

More generally, it is natural to ask how broad the class of
Riemannian manifolds admitting a minimal polynomial
\(P_{\min}(M,g) \in \Gamma(\scrP(M))[\lambda]\) is and in particular
whether there are further homogeneous examples.

On the other hand, the left-invariant metric on \(\SU(2)\) corresponding to
\((\lambda_1,\lambda_2,\lambda_3)=(2,1,\tfrac12)\) is neither bi-invariant
nor of Berger type. Here, one can show that the minimal polynomial in the sense
of Definition~\ref{de:minimal_polynomial} does not exist;
in particular, the kernel of the evaluation map \(\Kern(\Eval_p)\)
is not an ideal of \(\scrP(T_pM)[\lambda]\). Hence, one cannot hope to characterize general
homogeneous spaces by the concepts developed in this article.

\section{An ideal-theoretic approach to polynomial minimality}
\label{se:ideal}
We work in the following abstract framework.
Let \(\scrR\) be a Noetherian UFD, and let \(\scrM\) be a torsion-free
Noetherian \(\scrR\)-module. Every sequence
\[
  \symR = (\symR^0,\symR^1,\ldots) \in \scrM^{\N_0}
\]
determines a unique \(\scrR\)-module homomorphism
\begin{equation}\label{eq:eval_3}
  \Eval_{\symR} \colon \scrR[\lambda] \longrightarrow \scrM
\end{equation}
such that
\begin{equation}\label{eq:eval_4}
  \Eval_{\symR}(\lambda^i) \coloneqq \symR^i
\end{equation}
for all \(i \in \N_0\), in analogy with~\eqref{eq:eval_1}
and~\eqref{eq:eval_2}. Since \(\Eval_{\symR}\) is an \(\scrR\)-module
homomorphism, its kernel \(\Kern(\Eval_{\symR})\) is automatically
closed under multiplication by elements of \(\scrR\). In general, however,
it need not be closed under multiplication by \(\lambda\), and
therefore it is not automatically an ideal of \(\scrR[\lambda]\).

\begin{lemma}\label{le:principal_ideal}
Let \(\scrR\) be a Noetherian UFD, let \(\scrM\) be a torsion-free
Noetherian \(\scrR\)-module, and let
\[
  \symR = (\symR^0,\symR^1,\ldots) \in \scrM^{\N_0}
\]
If \(\Kern(\Eval_{\symR})\) is an ideal of \(\scrR[\lambda]\), then it is a
principal ideal of \(\scrR[\lambda]\).
\end{lemma}

\begin{proof}
Let
\[
  S \coloneqq \scrR \setminus \{0\}
\]
so that
\[
  \scrK \coloneqq S^{-1}\scrR
\]
is the quotient field of \(\scrR\).

Since \(\scrM\) is a Noetherian \(\scrR\)-module, the ascending chain of
submodules
\[
  \scrM_i \coloneqq \langle \symR^0,\ldots,\symR^i\rangle_\scrR
\]
stabilizes. Hence there exists \(m \in \N\) such that
\[
  \symR^m \in \langle \symR^0,\ldots,\symR^{m-1}\rangle_\scrR
\]
Therefore
\[
  \symR^m + c_1 \symR^{m-1} + \cdots + c_m \symR^0 = 0
\]
for suitable coefficients \(c_1,\ldots,c_m \in \scrR\). Thus the monic
polynomial
\[
  P(\lambda)
  \coloneqq
  \lambda^m + c_1 \lambda^{m-1} + \cdots + c_m
\]
satisfies
\[
  \Eval_{\symR}(P) = 0
\]

Localizing the exact sequence
\[
  0 \longrightarrow \Kern(\Eval_{\symR})
    \longrightarrow \scrR[\lambda]
    \xrightarrow{\Eval_{\symR}} \scrM
\]
at \(S\), we obtain an exact sequence
\[
  0 \longrightarrow S^{-1}\Kern(\Eval_{\symR})
    \longrightarrow \scrK[\lambda]
    \xrightarrow{\widetilde{\Eval}_{\symR}} \scrM \otimes_\scrR \scrK
\]
where \(\widetilde{\Eval}_{\symR}\) denotes the scalar extension of
\(\Eval_{\symR}\). Hence
\[
  \Kern(\widetilde{\Eval}_{\symR})
  =
  S^{-1}\Kern(\Eval_{\symR})
\]
In particular, \(\Kern(\widetilde{\Eval}_{\symR})\) is an ideal of
\(K[\lambda]\). Since \(P \in \Kern(\Eval_{\symR})\), viewing \(P\) in
\(K[\lambda]\) yields
\[
  0 \neq P \in \Kern(\widetilde{\Eval}_{\symR})
\]
Therefore \(\Kern(\widetilde{\Eval}_{\symR})\) is a nonzero ideal of the
PID \(\scrK[\lambda]\), and hence there exists a unique monic polynomial
\[
  P_{\min}
  =
  \sum_{i=0}^{k} a_i \lambda^{k-i}
  \in \scrK[\lambda]
\]
such that
\[
  \Kern(\widetilde{\Eval}_{\symR})
  =
  \scrK[\lambda]\,P_{\min}
\]

Since \(P \in \Kern(\widetilde{\Eval}_{\symR})\), the polynomial
\(P_{\min}\) divides \(P\) in \(\scrK[\lambda]\). Let \(\overline{\scrK}\) be
an algebraic extension of \(\scrK\) containing all roots of \(P\). Since
\(P\) is monic with coefficients in \(\scrR\), every root of \(P\) is
integral over \(\scrR\). Since \(P_{\min}\) divides \(P\) in \(\scrK[\lambda]\),
every root of \(P_{\min}\) in \(\overline{\scrK}\) is also a root of \(P\),
and hence is integral over \(\scrR\). Therefore the coefficients \(a_i\) of \(P_{\min}\),
being elementary symmetric polynomials in the roots of \(P_{\min}\),
are integral over \(\scrR\), too, because integral elements form a
subring; see~\cite[Corollary~5.3]{AM}.

On the other hand, \(a_i \in \scrK\). Since \(\scrR\) is a UFD, it is
integrally closed in its quotient field. Indeed, if \(x = b/c \in \scrK\)
with \(b,c \in \scrR\) coprime is integral over \(\scrR\), then a monic
equation
\[
  x^k = \sum_{i=1}^k \tilde a_i x^{k-i}
\]
for \(x\), with \(\tilde a_i \in \scrR\), implies
\[
  b^k
  =
  \sum_{i=1}^k \tilde a_i b^{k-i}c^i
  =
  c \sum_{i=0}^{k-1} \tilde a_{i+1} b^{k-i-1}c^i
\]
Hence \(c \mid b^k\). Since \(b\) and \(c\) are coprime and \(\scrR\) is a
UFD, it follows that \(c\) is a unit. Thus \(x \in \scrR\).

Applying this to \(x = a_i\), we see that
\[
  a_i \in \scrR
  \qquad
  \text{for all } i
\]
Therefore
\[
  P_{\min} \in \scrR[\lambda]
\]

Since \(\scrM\) is torsion-free, the natural map
\[
  \scrM \longrightarrow \scrM \otimes_\scrR \scrK
\]
is injective. Thus, for \(f \in \scrR[\lambda]\), we have
\[
  \Eval_{\symR}(f) = 0
  \quad\Longleftrightarrow\quad
  \widetilde{\Eval}_{\symR}(f) = 0
\]
and therefore
\[
  \Kern(\Eval_{\symR})
  =
  \Kern(\widetilde{\Eval}_{\symR}) \cap \scrR[\lambda]
  =
  \scrK[\lambda]\,P_{\min} \cap \scrR[\lambda]
\]

It remains to prove that
\[
  \scrK[\lambda]\,P_{\min} \cap \scrR[\lambda]
  =
  \scrR[\lambda]\,P_{\min}
\]
The inclusion “\(\supseteq\)” is clear because \(P_{\min} \in \scrR[\lambda]\).
Conversely, let
\[
  Q \in \scrK[\lambda]\,P_{\min} \cap \scrR[\lambda]
\]
Since \(P_{\min}\) is monic, the division algorithm in \(\scrR[\lambda]\)
yields polynomials \(U,V \in \scrR[\lambda]\) such that
\[
  Q = U\,P_{\min} + V
\]
with either \(V = 0\) or \(\deg V < \deg P_{\min}\). But
\(Q \in \scrK[\lambda]\,P_{\min}\), so the remainder upon division by
\(P_{\min}\) must be zero in \(\scrK[\lambda]\). Hence \(V = 0\), and thus
\[
  Q = U\,P_{\min} \in \scrR[\lambda]\,P_{\min}
\]
Therefore
\[
  \scrK[\lambda]\,P_{\min} \cap \scrR[\lambda]
  =
  \scrR[\lambda]\,P_{\min}
\]
and consequently
\[
  \Kern(\Eval_{\symR})
  =
  \scrR[\lambda]\,P_{\min}
\]
This proves that \(\Kern(\Eval_{\symR})\) is a principal ideal of
\(\scrR[\lambda]\).
\end{proof}

\subsection{Application to \(\frakC_0\)-spaces}
\label{se:C_0_space}

Let \(V\) and \(W\) be arbitrary finite-dimensional real vector spaces.
We write \(\scrP(V)\) for the \(\bbR\)-algebra of real-valued
polynomial functions on \(V\) and \(\scrP(V;W)\) for the
\(\scrP(V)\)-module of \(W\)-valued polynomial functions on \(V\). In
the applications relevant here, the pertinent case is
\(V \coloneqq T_pM\) and \(W \coloneqq \End{T_pM}\).

Set \(\scrR \coloneqq \scrP(V)\), \(\scrM \coloneqq \scrP(V;W)\), and let
\[
  \symR = (\symR^0,\symR^1,\ldots) \in \scrM^{\N_0}
\]
be a sequence in \(\scrM\). The following lemma gives a simple sufficient condition
for the kernel of the evaluation map
\(\Eval_{\symR}\colon \scrR[\lambda] \to \scrM\), defined by
\eqref{eq:eval_3} and~\eqref{eq:eval_4}, to be an ideal of
\(\scrR[\lambda]\).

\begin{lemma}\label{le:C_0_ideal}
Suppose that for every \(X \in V\) there exists \(C_X \in \End{W}\)
such that, for all \(i \ge 0\),
\begin{equation}\label{eq:C_X_1}
  \symR^{i+1}(X) = C_X\bigl(\symR^i(X)\bigr)
\end{equation}
Then the kernel \(\Kern(\Eval_{\symR})\) of the evaluation map
\(\Eval_{\symR}\colon \scrR[\lambda]\to\scrM\) is an ideal of
\(\scrR[\lambda]\).
\end{lemma}

\begin{proof}
Since \(\Eval_{\symR}\) is a homomorphism of \(\scrR\)-modules,
\(\Kern(\Eval_{\symR})\) is an \(\scrR\)-submodule of \(\scrR[\lambda]\).
Therefore, to prove that \(\Kern(\Eval_{\symR})\) is an ideal, it
suffices to show that it is closed under multiplication by
\(\lambda\).

Let \(j \ge 0\) and let \(b_i \in \scrR\) for \(0 \le i \le j\). Assume
that
\[
  \Eval_{\symR}\Bigl(\sum_{i=0}^{j} b_i\,\lambda^i\Bigr) = 0
\]
Then for all \(X \in V\),
\[
  0 = \Eval_{\symR}\Bigl(\sum_{i=0}^{j} b_i\,\lambda^i\Bigr)(X)
  =
  \sum_{i=0}^{j} b_i(X)\,\symR^i(X)
\]
Applying \(C_X\) and using~\eqref{eq:C_X_1}, we obtain
\begin{align*}
  0
  &=
  C_X\Bigl(\sum_{i=0}^{j} b_i(X)\,\symR^i(X)\Bigr)
  =
  \sum_{i=0}^{j} b_i(X)\,C_X\bigl(\symR^i(X)\bigr) \\
  &\stackrel{\eqref{eq:C_X_1}}{=}
  \sum_{i=0}^{j} b_i(X)\,\symR^{i+1}(X)
  =
  \Eval_{\symR}\Bigl(\sum_{i=0}^{j} b_i\,\lambda^{i+1}\Bigr)(X)
\end{align*}
for all \(X \in V\). Hence
\[
  \Eval_{\symR}\Bigl(\lambda\sum_{i=0}^{j} b_i\,\lambda^i\Bigr)
  =
  \Eval_{\symR}\Bigl(\sum_{i=0}^{j} b_i\,\lambda^{i+1}\Bigr)
  =
  0
\]
which proves closure under multiplication by \(\lambda\). Thus
\(\Kern(\Eval_{\symR})\) is an ideal of \(\scrR[\lambda]\).
\end{proof}

Let \((M,g)\) be a \(\frakC_0\)-space, let \(p \in M\), and let
\(X \in T_pM\). Consider the geodesic \(\gamma\colon I \to M\) with
\(\gamma(0) = p\) and \(\dot\gamma(0) = X\). Let \(\C_\gamma\) be a
\(\frakC_0\)-structure along \(\gamma\) and set
\begin{equation}\label{eq:def_C_X}
  C_X \coloneqq \ad\bigl(\C_\gamma(0)\bigr)
  \colon \End{T_pM} \longrightarrow \End{T_pM}
\end{equation}
where
\begin{equation}\label{eq:ad}
  \ad \colon \End{T_pM} \longrightarrow \End{\End{T_pM}},\qquad
  A \longmapsto \ad(A) \coloneqq [\,A,\,\cdot\,]
\end{equation}
denotes the adjoint action.

\begin{proposition}\label{p:C_0}
Let \((M,g)\) be a \(\frakC_0\)-space, let \(p \in M\), and let
\(X \in T_pM\). The endomorphism
\(C_X \in \End{\End{T_pM}}\) defined by~\eqref{eq:def_C_X} satisfies
\begin{equation}\label{eq:C_X_2}
  \symR^{i+1}\big|_p(X) = C_X\bigl(\symR^i\big|_p(X)\bigr)
\end{equation}
for all \(i \ge 0\).
\end{proposition}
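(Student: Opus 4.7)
The plan is to reduce the identity \eqref{eq:C_X_2} to a statement along the geodesic $\gamma\colon I \to M$ with $\gamma(0)=p$ and $\dot\gamma(0)=X$, and then to establish the resulting recursion inductively from the $\frakC_0$ equation. Writing $\scrR_\gamma^{(i)}(t) \coloneqq \frac{\nabla^i}{\dt^i}\scrR_\gamma(t)$ for the iterated covariant derivatives of the Jacobi operator along $\gamma$, the argument splits into two independent ingredients: \textbf{(a)} the pointwise identification $\scrR^i|_p(X) = \scrR_\gamma^{(i)}(0)$ for every $i \ge 0$, and \textbf{(b)} the recursion $\scrR_\gamma^{(i+1)} = [\C_\gamma,\scrR_\gamma^{(i)}]$ along $\gamma$. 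Combining (a) and (b) at $t=0$ and recalling that $C_X = \ad(\C_\gamma(0))$ then yields \eqref{eq:C_X_2} at once.

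For (a), I fix an arbitrary parallel vector field $Y$ along $\gamma$; since $\gamma$ is a geodesic, $\dot\gamma$ is also parallel. Applying the Leibniz rule for $\frac{\nabla}{\dt}$ to $\scrR_\gamma(t)Y(t) = \rmR(Y(t),\dot\gamma(t))\dot\gamma(t)$, all terms containing $\frac{\nabla}{\dt}Y$ or $\frac{\nabla}{\dt}\dot\gamma$ vanish, so $\frac{\nabla}{\dt}(\scrR_\gamma Y) = (\nabla_{\dot\gamma}\rmR)(Y,\dot\gamma)\dot\gamma$. Iterating $i$ times, at each step using parallelism of $\dot\gamma$ and of $Y$, gives $\frac{\nabla^i}{\dt^i}(\scrR_\gamma Y)\big|_{t=0} = (\nabla^i_{X,\ldots,X}\rmR)(Y(0),X)X = \scrR^i|_p(X)\,Y(0)$. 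On the other hand, since $Y$ is parallel, the left-hand side equals $\scrR_\gamma^{(i)}(0)\,Y(0)$, and since $Y(0) \in T_pM$ is arbitrary this identifies the two endomorphisms.

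For (b), I argue by induction on $i$. The base case $i=0$ is exactly the defining $\frakC_0$ equation \eqref{eq:def_C_0}. Assuming $\scrR_\gamma^{(i+1)} = [\C_\gamma,\scrR_\gamma^{(i)}]$, I differentiate once more along $\gamma$ using the Leibniz rule for $\frac{\nabla}{\dt}$ acting on composition of endomorphism fields:
\[
  \scrR_\gamma^{(i+2)}
  = \tfrac{\nabla}{\dt}\bigl[\C_\gamma,\scrR_\gamma^{(i)}\bigr]
  = \bigl[\tfrac{\nabla}{\dt}\C_\gamma,\,\scrR_\gamma^{(i)}\bigr]
  + \bigl[\C_\gamma,\scrR_\gamma^{(i+1)}\bigr]
  = \bigl[\C_\gamma,\scrR_\gamma^{(i+1)}\bigr],
\]
where the first bracket vanishes by parallelism of $\C_\gamma$. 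Evaluating at $t=0$ and applying (a) to both $\scrR_\gamma^{(i)}(0)$ and $\scrR_\gamma^{(i+1)}(0)$ yields $\scrR^{i+1}|_p(X) = [\C_\gamma(0),\scrR^i|_p(X)] = C_X(\scrR^i|_p(X))$.

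The main technical point — rather than a genuine obstacle — is step (a): one must execute the iterated Leibniz rule carefully and verify that testing on parallel sections captures the full tensor-valued covariant derivative along $\gamma$. Once this is settled, the inductive recursion in (b) is essentially automatic from the parallelism of $\C_\gamma$ and the $\frakC_0$ equation, and the proof closes cleanly.
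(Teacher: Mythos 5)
Your proof is correct and follows essentially the same route as the paper: both establish the identification $\scrR^i|_p(X)=\scrR_\gamma^{(i)}(0)$ via parallelism of $\dot\gamma$ along the geodesic, and then propagate the $\frakC_0$ equation by induction on $i$ using parallelism of $\C_\gamma$ and the Leibniz rule (the paper phrases this by noting that $\ad(\C_\gamma(t))$ is a parallel endomorphism field of $\End{T_{\gamma(t)}M}$ and hence commutes with $\tfrac{\nabla}{\d t}$, which is the same computation in different packaging). Your more explicit treatment of the identification step (testing against parallel sections) is a reasonable expansion of what the paper asserts directly from $\tfrac{\nabla}{\d t}\dot\gamma\equiv 0$.
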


\begin{proof}
Let \(\gamma\) be the geodesic with \(\gamma(0)=p\) and
\(\dot\gamma(0)=X\). Recall that the Jacobi operator along \(\gamma\)
is the endomorphism field
\(\symR_\gamma(t)\in\End{T_{\gamma(t)}M}\) given by
\[
  \symR_\gamma(t) = \rmR(\,\cdot\,,\dot\gamma(t))\dot\gamma(t)
\]
see~\eqref{eq:Jacobi_operator}. We write
\[
  \symR^{(i)}_\gamma \coloneqq
  \Bigl(\tfrac{\nabla}{\d t}\Bigr)^i \symR_\gamma
  \qquad (i \ge 0)
\]
for its iterated covariant derivatives along \(\gamma\). Here
\(\tfrac{\nabla}{\d t}\) acts on endomorphism fields as in
\eqref{eq:Jacobi_operator_abgeleitet} and is iterated recursively by
\[
  \symR^{(0)}_\gamma(t) \coloneqq \symR_\gamma(t),
  \qquad
  \symR^{(j)}_\gamma(t) \coloneqq
  \tfrac{\nabla}{\d t}\symR^{(j-1)}_\gamma(t)
  \quad (j \ge 1)
\]

Since \(\dot\gamma\) is parallel along \(\gamma\), we have
\begin{equation}\label{eq:Jacobi_operator_versus_symmetrized_curvature_tensor}
  \symR^{(i)}_\gamma(t)
  =
  \symR^i\big|_{\gamma(t)}\bigl(\dot\gamma(t)\bigr)
\end{equation}
for all \(i \ge 0\), where the right-hand side is defined by
\eqref{eq:kth_order_symmetrized_curvature_tensor_1} and
\eqref{eq:kth_order_symmetrized_curvature_tensor_2}.

By definition,
\[
  \symR^{(1)}_\gamma(t)
  =
  [\C_\gamma(t),\,\symR^{(0)}_\gamma(t)]
  =
  \ad\bigl(\C_\gamma(t)\bigr)\bigl(\symR^{(0)}_\gamma(t)\bigr)
\]
for all \(t\).

Since \(\tfrac{\nabla}{\d t}\C_\gamma(t) \equiv 0\), the induced
endomorphism field \(\ad(\C_\gamma(t))\) of \(\End{T_{\gamma(t)}M}\) is
parallel as well. Hence \(\tfrac{\nabla}{\d t}\) commutes with
\(\ad(\C_\gamma(t))\). Equivalently, for every endomorphism field
\(A(t)\) along \(\gamma\),
\begin{equation}\label{eq:ad_commutes_with_nabla_t}
  \tfrac{\nabla}{\d t}\bigl(\ad(\C_\gamma(t))A(t)\bigr)
  =
  \ad(\C_\gamma(t))\Bigl(\tfrac{\nabla}{\d t}A(t)\Bigr)
\end{equation}

Using~\eqref{eq:ad_commutes_with_nabla_t} and differentiating the
identity
\[
  \symR^{(1)}_\gamma = \ad(\C_\gamma)\symR^{(0)}_\gamma
\]
an induction on \(i\) yields
\[
  \symR_{\gamma}^{(i+1)}(t)
  =
  \ad\bigl(\C_\gamma(t)\bigr)\bigl(\symR_{\gamma}^{(i)}(t)\bigr)
\]
for all \(i \ge 0\). Evaluating at \(t=0\) and using
\eqref{eq:def_C_X} and
\eqref{eq:Jacobi_operator_versus_symmetrized_curvature_tensor}, we obtain
\[
  \symR^{i+1}\big|_p(X)
  \stackrel{\eqref{eq:Jacobi_operator_versus_symmetrized_curvature_tensor}}{=}
  \symR_\gamma^{(i+1)}(0)
  =
  \ad\bigl(\C_\gamma(0)\bigr)\bigl(\symR_{\gamma}^{(i)}(0)\bigr)
  \stackrel{\eqref{eq:def_C_X},\,\eqref{eq:Jacobi_operator_versus_symmetrized_curvature_tensor}}{=}
  C_X\bigl(\symR^i\big|_p(X)\bigr)
\]
which is~\eqref{eq:C_X_2}.
\end{proof}

\begin{corollary}\label{co:C_0_ideal}
Let \((M,g)\) be a \(\frakC_0\)-space and let \(p \in M\). The kernel
\(\Kern(\Eval_p)\) of the evaluation map~\eqref{eq:eval_2} is a
principal ideal of \(\scrP(T_pM)[\lambda]\).
\end{corollary}

\begin{proof}
By Lemma~\ref{le:C_0_ideal} and Proposition~\ref{p:C_0}, the kernel
\(\Kern(\Eval_p)\) of the evaluation map~\eqref{eq:eval_2} is an ideal
of \(\scrP(T_pM)[\lambda]\).

Set \(V \coloneqq T_pM\) and \(W \coloneqq \End{V}\), and choose a basis
\(E_1,\ldots,E_n\) of \(V\). Then the graded \(\bbR\)-algebra
\(\scrP(V)\) identifies with the polynomial ring
\(\bbR[X_1,\ldots,X_n]\). By Hilbert's Basis Theorem and Gauss's lemma
(cf.~[Ch.~1, Ex.~2(iv)]\cite{AM}), a polynomial ring over a Noetherian
UFD is again a Noetherian UFD. Iterating this argument, we conclude
that \(\bbR[X_1,\ldots,X_n]\), and hence \(\scrP(V)\), is a Noetherian
UFD.

Moreover, \(\scrP(V;W)\) is a finite free \(\scrP(V)\)-module, so
Lemma~\ref{le:principal_ideal} implies that the ideal
\(\Kern(\Eval_p)\) is principal.
\end{proof}

\section{Proof of Theorem~\ref{th:main}}
\label{se:proof_of_theorem_main}
Continuing with the algebraic setting from the previous section, let
\(V\) and \(W\) be finite-dimensional real vector spaces. Every
sequence
\[
  \symR = (\symR^0,\symR^1,\ldots) \in \scrP(V;W)^{\N_0}
\]
induces an ascending chain
\[
  \scrA_0(\symR) \;\subseteq\; \scrA_1(\symR) \;\subseteq\;
  \scrA_2(\symR) \;\subseteq\; \cdots
\]
of algebraic subsets of \(V\), defined by
\begin{equation}\label{eq:A_j}
  \scrA_j(\symR) \;\coloneqq\; \{\, X \in V \mid
  \{\symR^0(X),\,\symR^1(X),\,\ldots,\,\symR^j(X)\}
  \text{ is linearly dependent in } W \,\}
\end{equation}
Moreover, each \(\scrA_j(\symR)\) is closed in the Zariski topology.
Since \(W\) is finite-dimensional, we have \(\scrA_j(\symR) = V\) for
all sufficiently large \(j\). Let \(k \ge 0\) be the smallest integer
such that \(\scrA_k(\symR) = V\).

Next, let \(\scrF(V)\) denote the field of rational functions on \(V\),
and let \(\scrF(V;W)\) denote the \(\scrF(V)\)-module of \(W\)-valued
rational functions on \(V\).

\begin{lemma}\label{le:rational}
There exist uniquely determined \(a_i \in \scrF(V)\) for
\(i = 1,\,\ldots,\,k\) such that
\begin{equation}\label{eq:algebraic_minimal_polynomial_1}
  \symR^k \;=\; -\sum_{i=1}^k a_i\,\symR^{k-i}
\end{equation}
holds in \(\scrF(V;W)\).
\end{lemma}

\begin{proof}
Set \(\scrA_j := \scrA_j(\symR)\).
The integer \(k\) above is the smallest index with \(\scrA_k = V\), that is,
the largest index for which \(\scrA_{k-1} \subsetneq V\).
Thus, \(U := V \setminus \scrA_{k-1}\) is nonempty and open in the Zariski
topology and therefore also open and dense in the standard topology.
By definition it follows directly that
\[
  U \;=\; \bigl\{\, X \in V \,\big|\, \{\symR^j(X)\}_{j=0}^{k-1}
  \text{ is linearly independent} \,\bigr\}
\]
On the other hand, the set
\[
  \{\symR^0(X),\,\symR^1(X),\,\ldots,\,\symR^k(X)\}
\]
is linearly dependent for all \(X \in V\) because \(V = \scrA_k\).
It follows that there exist uniquely determined functions
\(a_i \colon U \to \bbR\) such that
\[
  \symR^k(X) \;=\; -\sum_{i=1}^k a_i(X)\,\symR^{k-i}(X)
\]
holds for all \(X \in U\).
It remains to show that each \(a_i\) is a rational function. This follows
from an argument entirely analog to Cramer's rule in linear algebra: the unknowns \(a_i(X)\)
appear as coefficients in a linear dependence relation between the
\(k+1\) endomorphisms
\(\symR^0(X),\symR^1(X),\ldots,\symR^k(X)\).
Replacing in turn the \((k-i)\)th entry by \(\symR^k(X)\) and passing to
the exterior product \(\Lambda^k W\) yields
\[
  \symR^0(X) \wedge \cdots \wedge \overbrace{\symR^k(X)}^{k-i}
  \wedge \cdots \wedge \symR^{k-1}(X)
  \;=\; -\,a_i(X)\,\symR^0(X) \wedge \cdots \wedge \symR^{k-1}(X)
\]
for each \(X \in U\).
This is the exact analog of replacing one column of a matrix by the
right-hand side vector when applying Cramer's rule to solve a linear
system.
Furthermore, fix an arbitrary \(X_0 \in U\).
By the defining property of \(U\), we have
\[
  \symR^0(X_0) \wedge \cdots \wedge \symR^{k-1}(X_0) \;\neq\; 0
\]
Hence, there exists a linear functional \(\alpha \in \Lambda^k W^*\) such
that
\[
  \alpha\!\big(\symR^0(X_0) \wedge \cdots \wedge \symR^{k-1}(X_0)\big)
  \;\neq\; 0
\]
Evaluating both sides with \(\alpha\) gives
\[
  a_i(X) \;=\; -\,\frac{\alpha\!\big(\symR^0(X) \wedge \cdots \wedge
  \overbrace{\symR^k(X)}^{k-i} \wedge \cdots \wedge
  \symR^{k-1}(X)\big)}
  {\alpha\!\big(\symR^0(X) \wedge \cdots \wedge
  \symR^{k-1}(X)\big)}
\]
with nonzero denominator in a neighborhood of \(X_0\).
This shows that \(a_i\) is a rational function.
\end{proof}

By the preceding discussion, the polynomial
\begin{equation}\label{eq:algebraic_minimal_polynomial_2}
  P_{\min}(\symR)
  \;\coloneqq\;
  \lambda^k \;+\; \sum_{i=1}^k a_i\,\lambda^{k-i}
  \;\in\; \scrF(V)[\lambda]
\end{equation}
associated with any sequence
\[
  \symR \coloneqq (\symR^0,\symR^1,\ldots)
  \in \scrP(V;W)^{\N_0}
\]
is well-defined. In particular, this applies to the symmetrized jet
\[
  \symR|_p \coloneqq (\symR^0|_p,\symR^1|_p,\ldots)
\]
of the curvature tensor at an arbitrary point \(p\) of a Riemannian
manifold \((M,g)\), yielding a polynomial
\begin{equation}\label{eq:algebraic_minimal_polynomial_3}
  \widetilde P_{\min}(M,g,p) \in \scrF(T_pM)[\lambda]
\end{equation}

Since \(\scrP(T_pM;\End{T_pM})\) is a graded
\(\scrP(T_pM)\)-module and
\(\symR^{k-i}|_p \in \scrP^{k+2-i}(T_pM;\End{T_pM})\), comparison of
homogeneous degrees in~\eqref{eq:algebraic_minimal_polynomial_1} shows
that each coefficient \(a_i\) is homogeneous of degree \(i\). Hence
\(\widetilde P_{\min}(M,g,p)\) is monic and homogeneous and therefore satisfies
all requirements of Definition~\ref{de:minimal_polynomial}, except that
its coefficients need not a priori be polynomial. To complete the proof
of Theorem~\ref{th:main}, it remains to show that in the
\(\frakC_0\)-case these coefficients are in fact polynomial.

\begin{proof}[Proof of Theorem~\ref{th:main}]
Let \((M,g)\) be a \(\frakC_0\)-space, let \(p \in M\), and set
\[
  V \coloneqq T_pM,
  \qquad
  W \coloneqq \End{T_pM}
\]
Consider the sequence
\[
  \symR|_p \coloneqq (\symR^0|_p,\symR^1|_p,\ldots)
  \in \scrP(V;W)^{\N_0}
\]

On the one hand, the kernel \(\Kern(\Eval_p)\) of the evaluation map
\eqref{eq:eval_2} is a principal ideal of \(\scrP(V)[\lambda]\) by
Corollary~\ref{co:C_0_ideal}, hence generated by a unique monic polynomial
\[
  P_{\min}(M,g,p)
  =
  \sum_{i=0}^{k} a_i\,\lambda^{k-i}
\]
with coefficients \(a_i \in \scrP(V)\) for \(i=0,\ldots,k\). In
particular, \(a_0 = 1\).

The fact that \(\Eval_p(P_{\min}(M,g,p)) = 0\) means that
\[
  \sum_{i=0}^{k} a_i\,\symR^{k-i}|_p = 0
\]
Hence, for every \(X \in V\),
\[
  \sum_{i=0}^{k} a_i(X)\,\symR^{k-i}|_p(X) = 0
\]
and therefore
\begin{equation}\label{eq:linearly_dependent_set}
  \{\symR^0|_p(X),\symR^1|_p(X),\ldots,\symR^{k}|_p(X)\}
  \subseteq W
\end{equation}
is linearly dependent. Thus \(\scrA_k(\symR|_p) = V\) by~\eqref{eq:A_j}.
On the other hand, by construction, the degree of the polynomial
\[
  \widetilde P_{\min}(M,g,p)  \in \scrF(V)[\lambda]
\]
from~\eqref{eq:algebraic_minimal_polynomial_3} is the smallest integer \(\tilde k\)
that satisfies  \(\scrA_{\tilde k}(\symR|_p) = V\). It follows that \(\tilde k \leq k\).

To finally prove that \(P_{\min}(M,g,p)  =  \widetilde P_{\min}(M,g,p)\), let
\[
  \widetilde{\Eval}_p \colon \scrF(V)[\lambda] \longrightarrow \scrF(V;W)
\]
be the scalar extension of \(\Eval_p\). Since localization is exact, we
have
\[
  \Kern(\widetilde{\Eval}_p)
  =
  \scrF(V)\otimes_{\scrP(V)} \Kern(\Eval_p)
  =
  \scrF(V)[\lambda]\,P_{\min}
\]
As \(\widetilde P_{\min}(M,g,p) \in \Kern(\widetilde{\Eval}_p)\) by
\eqref{eq:algebraic_minimal_polynomial_1}, it follows that \( P_{\min}(M,g,p)\)
divides \( \widetilde P_{\min}(M,g,p) \) in \(\scrF(V)[\lambda]\).  Therefore
\[
  P_{\min}(M,g,p)  =  \widetilde P_{\min}(M,g,p)
\]
because both
\( \widetilde P_{\min}(M,g,p)\) and \(P_{\min}(M,g,p) \) are monic and \(\tilde k \le k\).

Altogether, this shows that \(P_{\min}(M,g,p)\) satisfies Definition~\ref{de:minimal_polynomial}.
\end{proof}

\section{Pointwise minimal polynomials of a \Czero-space}\label{se:pointwise_properties}
Following our general algebraic approach, we consider the ascending chain
of algebraic subsets \(\scrA_j \subseteq V\) from~\eqref{eq:A_j} associated to a sequence
\(\symR = (\symR^0,\symR^1,\ldots)\) in \(\scrP(V;W)\). Recall that the degree \(k\)
of the polynomial \(P_{\min}(\symR)\) from~\eqref{eq:algebraic_minimal_polynomial_3}
is the smallest integer \(k\) such that \(\scrA_k = V\) holds.

For each \(X \in V\), if \(\symR^j(X)=0\) for all \(j \ge 0\), set \(k(X) \coloneqq 0\).
Otherwise, define \(1\le k(X) \le k\) to be the unique integer such that
\begin{equation}\label{eq:k(X)}
  X \in \scrA_{k(X)}(\symR) \setminus \scrA_{k(X)-1}(\symR)
\end{equation}
Then the set
\[
  \{\,\symR^0(X),\dotsc,\symR^{k(X)-1}(X)\,\}
\]
is linearly independent in \(W\), while the extended set
\[
  \{\,\symR^0(X),\dotsc,\symR^{k(X)}(X)\,\}
\]
is linearly dependent. Thus there exist unique coefficients
\(a_i(X) \in \bbR\) for \(i = 1,\dotsc,k(X)\) such that
\begin{equation}\label{eq:pointwise_minimal_polynomial_1}
  \symR^{k(X)}(X)
  \;=\; -\sum_{i=1}^{k(X)} a_i(X)\,\symR^{k(X)-i}(X)
\end{equation}
The (pointwise) minimal polynomial of the \(W\)-valued sequence
\[
  \symR(X) \coloneqq (\symR^0(X),\symR^1(X),\ldots)
\]
is defined by
\begin{equation}\label{eq:pointwise_minimal_polynomial_2}
  P_{\min}(\symR(X)) \;\coloneqq\; \lambda^{k(X)} +
  \sum_{i=1}^{k(X)} a_i(X)\,\lambda^{k(X)-i}
\end{equation}

\begin{lemma}\label{le:C_0_roots}
Let \(X \in V\) and suppose there exists \(C_X \in \End{W}\) such that
\eqref{eq:C_X_1} holds for all \(i \ge 0\). Then:
\begin{enumerate}
  \item The pointwise minimal polynomial \(P_{\min}(\symR(X))\) divides the
    minimal polynomial \(P_{\min}(C_X)\) of \(C_X\) in \(\bbR[\lambda]\).
  \item If \(W\) is a Euclidean space and \(C_X \in \Endm{W}\), then all
    complex roots of \(P_{\min}(\symR(X))\) from
    \eqref{eq:pointwise_minimal_polynomial_2} are purely imaginary and
    simple.
\end{enumerate}
\end{lemma}

\begin{proof}
View \(W\) as an \(\bbR[\lambda]\)-module via
\(\lambda \cdot Y \coloneqq C_X(Y)\) for all \(Y \in W\). For
\(\symR^0(X) \in W\), the annihilator
\[
  \Ann_{\bbR[\lambda]}(\symR^0(X)) \;\coloneqq\;
  \{\,P \in \bbR[\lambda] \mid P \cdot \symR^0(X) = 0\,\}
\]
is an ideal of \(\bbR[\lambda]\). Since \(\bbR[\lambda]\) is a principal
ideal domain, \(\Ann_{\bbR[\lambda]}(\symR^0(X))\) is principal and
generated by a unique monic polynomial, which we denote by
\(P_{\min}(C_X,\symR^0(X)) \in \bbR[\lambda]\). Equivalently,
\(P_{\min}(C_X,\symR^0(X))\) is the monic polynomial of smallest degree
such that
\[
  P_{\min}(C_X,\symR^0(X)) \cdot \symR^0(X) = 0
\]
and is called the \emph{minimal annihilating polynomial} of \(\symR^0(X)\)
for \(C_X\).

Consider the cyclic subspace
\[
  \bbR[\lambda]\symR^0(X)
  \;=\; \operatorname{span}\{\,\symR^0(X),\,C_X\symR^0(X),\,
  (C_X)^2\symR^0(X),\ldots\} \subseteq W
\]
This is the smallest \(C_X\)-invariant subspace of \(W\) containing
\(\symR^0(X)\). Since \(W\) is finite-dimensional, there is a
smallest integer \(\tilde{k}(X)\) such that
\[
  (C_X)^{\tilde{k}(X)}\symR^0(X) \in
  \operatorname{span}\{\,\symR^0(X),\,C_X\symR^0(X),\,\dotsc,\,
  (C_X)^{\tilde{k}(X)-1}\symR^0(X)\,\}
\]
Hence there exist uniquely determined \(\tilde a_i(X)\in\bbR\) for
\(i=1,\dotsc,\tilde{k}(X)\) with
\begin{equation}\label{eq:pointwise_minimal_polynomial_3}
  (C_X)^{\tilde{k}(X)}\symR^0(X)
  \;=\; -\sum_{i=1}^{\tilde{k}(X)} \tilde a_i(X)\,
  (C_X)^{\tilde{k}(X)-i}\symR^0(X)
\end{equation}
and thus
\[
  P_{\min}\bigl(C_X,\symR^0(X)\bigr)
  \;=\; \lambda^{\tilde{k}(X)} +
  \sum_{i=1}^{\tilde{k}(X)} \tilde a_i(X)\,\lambda^{\tilde{k}(X)-i}
\]
Using~\eqref{eq:C_X_1}, we have
\[
  (C_X)^i\symR^0(X) \;=\; \symR^i(X)
  \quad\text{for all } i \ge 0
\]
Substituting this into~\eqref{eq:pointwise_minimal_polynomial_3} and
comparing with~\eqref{eq:pointwise_minimal_polynomial_1} yields
\(\tilde a_i(X) = a_i(X)\), \(\tilde{k}(X)=k(X)\), and hence
\[
  P_{\min}\bigl(C_X,\symR^0(X)\bigr) = P_{\min}(\symR(X))
\]

Now let \(P_{\min}(C_X)\) denote the minimal polynomial of the operator
\(C_X\) on \(W\). Then \(P_{\min}(C_X)\) annihilates \(W\), hence also
annihilates \(\symR^0(X)\), i.e.,
\(P_{\min}(C_X)\cdot \symR^0(X)=0\). Therefore
\(P_{\min}(C_X)\in \Ann_{\bbR[\lambda]}(\symR^0(X))\). Since
\(\Ann_{\bbR[\lambda]}(\symR^0(X)) =
 \bbR[\lambda]\,P_{\min}(C_X,\symR^0(X))\), it follows that
\(P_{\min}(C_X,\symR^0(X))\) divides \(P_{\min}(C_X)\). Using the identity
\(P_{\min}(C_X,\symR^0(X)) = P_{\min}(\symR(X))\), we obtain that
\(P_{\min}(\symR(X))\) divides \(P_{\min}(C_X)\). This proves~(a).

For~(b), assume that \(W\) is Euclidean and \(C_X \in \Endm{W}\). Then
\(C_X\) is skew-adjoint, hence normal, and therefore diagonalizable over
\(\bbC\) with purely imaginary eigenvalues. In particular,
\(P_{\min}(C_X)\) has only purely imaginary complex roots, and these roots
are simple. Since \(P_{\min}(\symR(X))\) divides \(P_{\min}(C_X)\) by~(a),
its complex roots are a subset of those of \(P_{\min}(C_X)\), and thus are
purely imaginary and simple as well. This proves~(b).
\end{proof}

\begin{corollary}\label{co:C_0_2}
Let \((M,g)\) be a \(\frakC_0\)-space with positive definite metric.
Let \(p \in M\) and \(X \in T_pM\). Then all complex roots of the minimal
polynomial \(P_{\min}(\symR|_p(X))\) from
\eqref{eq:pointwise_minimal_polynomial_2} are purely imaginary and simple.
\end{corollary}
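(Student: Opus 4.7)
The plan is to derive the corollary directly from Lemma~\ref{le:C_0_2}(b) applied to the sequence \(\scrR|_p = (\scrR^0|_p,\scrR^1|_p,\ldots)\) viewed as a sequence in \(W \coloneqq \End{T_pM}\), with the operator \(C_X\) supplied by Proposition~\ref{p:C_0}. What needs to be verified is precisely the hypothesis of part~(b): that \(W\) can be endowed with a Euclidean inner product for which \(C_X\) is skew-symmetric.

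First I would equip \(W = \End{T_pM}\) with the trace inner product \(\langle A,B\rangle \coloneqq \tr(A^{*}B)\), where \(*\) denotes the \(g_p\)-adjoint. Since \(g\) is positive definite, this is a genuine Euclidean inner product on \(W\). Next I would recall, from~\eqref{eq:def_C_X}, that the operator furnished by Proposition~\ref{p:C_0} is \(C_X = \ad(\C_\gamma(0))\) where \(\gamma\) is the geodesic with \(\dot\gamma(0)=X\) and \(\C_\gamma\) is the \(\frakC_0\)-structure along \(\gamma\). By Definition~\ref{de:C_0}, \(\C_\gamma(0)\) is skew-symmetric, i.e.\ belongs to \(\Endm{T_pM}\). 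By Proposition~\ref{p:C_0}, \(C_X\) satisfies \eqref{eq:C_X_1} for the sequence \(\scrR|_p\), so Lemma~\ref{le:C_0_2} applies once we verify \(C_X \in \Endm{W}\).

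To check skew-symmetry of \(C_X = \ad(\C_\gamma(0))\) on \((W,\langle\cdot,\cdot\rangle)\), I would use the standard identity: for any \(A \in \End{T_pM}\) the adjoint of \(\ad(A)\) with respect to the trace form is \(\ad(A^{*})\), so that \(A^{*}=-A\) immediately gives \(\ad(A)^{*}=-\ad(A)\). A quick direct calculation, using cyclicity of the trace and \(\C_\gamma(0)^{*}=-\C_\gamma(0)\), verifies
\[
  \langle C_X B,\,C\rangle
  = \tr\!\bigl([\C_\gamma(0),B]^{*}C\bigr)
  = -\,\tr\!\bigl(B^{*}[\C_\gamma(0),C]\bigr)
  = -\langle B,\,C_X C\rangle
\]
for all \(B,C \in W\). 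Hence \(C_X \in \Endm{W}\).

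With hypotheses~(b) of Lemma~\ref{le:C_0_2} now in place, the lemma gives that \(P_{\min}(\scrR|_p(X))\) divides the minimal polynomial of the skew-symmetric operator \(C_X\) in \(\bbR[\lambda]\); since the latter has only purely imaginary simple roots over \(\bbC\), the same holds for \(P_{\min}(\scrR|_p(X))\). No step is really an obstacle: the only nontrivial ingredient is the skew-symmetry of \(\ad(A)\) on \((\End{T_pM},\tr(\cdot^{*}\cdot))\) for skew-symmetric \(A\), which is a standard fact about the trace form on \(\so(T_pM)\) and its embedding in \(\End{T_pM}\).
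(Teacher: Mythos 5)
Your proposal is correct and follows the same strategy as the paper: reduce to Lemma~\ref{le:C_0_2}(b) by finding a Euclidean structure on \(W\) that makes \(C_X = \ad(\C_\gamma(0))\) skew-adjoint, using cyclicity of the trace and the skew-symmetry of \(\C_\gamma(0)\). The only difference is a cosmetic choice of realization: you take \(W = \End{T_pM}\) with the Hermitian trace form \(\tr(A^*B)\), while the paper takes \(W = \Endp{T_pM}\) (the \(g_p\)-self-adjoint endomorphisms, which contain all the \(\scrR^i|_p(X)\)) with the unadjusted trace form \(\tr(A\circ B)\), which is positive definite on that subspace. Your version saves the paper's intermediate step of verifying that \(\ad(\C_\gamma(0))\) preserves \(\Endp{T_pM}\), at the cost of working in the larger ambient space; both are valid and yield the same conclusion.
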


\begin{proof}
Let \(\gamma\) be the geodesic with \(\gamma(0)=p\) and \(\dot\gamma(0)=X\).
Set \(W \coloneqq \Endp{T_pM}\). Since \(\C_\gamma(0)\) is \(g_p\)-skew-adjoint,
the commutator map \(\ad\bigl(\C_\gamma(0)\bigr)\) preserves \(W\): for
\(A \in W\),
\[
  \ad\bigl(\C_\gamma(0)\bigr)(A)^{*}
  \;=\; \bigl[\C_\gamma(0),A\bigr]^{*}
  \;=\; \bigl[\C_\gamma(0),A\bigr]
\]
Moreover, \(\symR|_p(X)\in W\) for all \(X \in T_pM\) by~\eqref{eq:scrR_is_self_adjoint}.
Equip \(\End{T_pM}\) with the inner product
\(\langle A,B\rangle \coloneqq \trace(A\circ B)\), which restricts to a
positive definite inner product on \(W\). Then
\[
  C_X \coloneqq \ad\bigl(\C_\gamma(0)\bigr)|_W \colon W \longrightarrow W
\]
is skew-adjoint with respect to \(\langle\,\cdot\,,\,\cdot\,\rangle\), since
for all \(A,B \in W\),
\[
  \langle C_XA,B\rangle
  \;=\; \trace\bigl(\,[\C_\gamma(0),A]\circ B\,\bigr)
  \;=\; -\trace\bigl(\,A\circ[\C_\gamma(0),B]\,\bigr)
  \;=\; -\langle A,C_XB\rangle
\]
Also~\eqref{eq:C_X_1} holds for this \(C_X\) according to~\eqref{eq:def_C_X},\eqref{eq:C_X_2}.
Hence Lemma~\ref{le:C_0_roots} applies (with \(V=T_pM\) and \(W=\Endp{T_pM}\))
and yields that all complex roots of \(P_{\min}(\symR|_p(X))\) are purely
imaginary and simple.
\end{proof}

\subsection{Proofs of Theorem~\ref{th:main_addition} and
Corollary~\ref{co:main_addition}}
\label{se:proof_of_theorem_main_addition}

\begin{proof}
For each \(X \in T_pM\), let \(k(X)\) be the degree of the minimal
polynomial \(P_{\min}(\symR|_p(X))\) defined in
\eqref{eq:pointwise_minimal_polynomial_2}. The subset \(U(p) \subseteq T_pM\)
from~\eqref{eq:U(p)_1} admits the equivalent characterizations:
\begin{align}
  U(p) &= \{\,X \in T_pM \mid k(X) = k\,\} \\
  \label{eq:U(p)_3}
       &= \{\,X \in T_pM \mid
  P_{\min}(M,g,p)(X) = P_{\min}(\symR|_p(X))\,\}
\end{align}

By \eqref{eq:U(p)_3}, \(P_{\min}(M,g,p)(X) = P_{\min}(\symR|_p(X))\) for
each \(X \in U(p)\). Hence
Corollary~\ref{co:C_0_2} shows that, in the positive definite case, all
complex roots of \(P_{\min}(M,g,p)(X)\) are purely imaginary and simple
for all \(X \in U(p)\). This gives Theorem~\ref{th:main_addition}~(a). 

Now suppose that \(a_k \not = 0\). We claim that \(\Ric|_p = 0\).
Consider again the geodesic \(\gamma \colon I \to M\) with \(\gamma(0)=p\)
and \(\dot\gamma(0)=X \in U(p)\). Since the Ricci tensor of a
\(\frakC_0\)-space is obviously a Killing tensor (cf.\ also~\cite[Theorem~3]{BV}), the
function
\[
  t \longmapsto \Ric(\dot\gamma(t),\dot\gamma(t))
\]
is constant along \(\gamma\). Hence
\[
  \trace\bigl(\symR_\gamma^{(i)}\bigr)(0)
  = \left.\frac{\mathrm{d}^i}{\mathrm{d}t^i}\right|_{t=0}
    \Ric(\dot\gamma(t),\dot\gamma(t))
  = 0
\]
for all \(i \ge 1\). Therefore, taking the trace of
\eqref{eq:admissible_in_p}, the left-hand side and all terms on the
right-hand side vanish except the summand with \(i = k\). Evaluating at
\(t = 0\) yields
\[
  0 = -\,a_k(X)\,\Ric(X,X)
\]
for all \(X \in U(p)\). By density of \(U(p)\subseteq T_pM\), we conclude
that \(a_k\,\Ric|_p = 0\). Because the polynomial ring \(\scrP(T_pM)\)
has no zero divisors, \(a_k \not = 0\) implies that \(\Ric|_p = 0\).
This finishes the proof of Theorem~\ref{th:main_addition}~(b).

For Corollary~\ref{co:main_addition}, assume again that the metric tensor
is positive definite. Let \(X \in U(p)\). Write the nonzero roots of
\(P_{\min}(M,g,p)(X)\) as
\(\{\pm \mathrm{i}\lambda_1(X),\dotsc,\pm \mathrm{i}
\lambda_{\lfloor k/2 \rfloor}(X)\}\), where \(\lambda_i(X) > 0\) and
\(\lambda_i(X) \neq \lambda_j(X)\) for \(i \ne j\). Also the only possible
real root is \(0\). Accordingly, note that \(0\) is a root if and only if
\(k\) is odd. Therefore, we have
\begin{equation*}
\begin{aligned}
  P_{\min}(M,g,p)(X)
  &=
  \begin{cases}
    \lambda\,\prod_{i=1}^{\lfloor k/2 \rfloor}
      \bigl(\lambda^2 + \lambda_i(X)^2\bigr)
      &\text{if \(k\) is odd} \\
    \quad\prod_{i=1}^{\lfloor k/2 \rfloor}
      \bigl(\lambda^2 + \lambda_i(X)^2\bigr)
      &\text{if \(k\) is even}
  \end{cases} \\
  &=
  \sum_{i=0}^{\lfloor k/2 \rfloor} a_{2i}(X)\,\lambda^{\,k-2i}
\end{aligned}
\end{equation*}
with \(a_{2i}(X) = \sigma_i\bigl(\lambda_1(X)^2,\ldots,
\lambda_{\lfloor k/2 \rfloor}(X)^2\bigr)\), where \(\sigma_i\) denotes the \(i\)th
elementary symmetric polynomial. Thus \(a_{2i} > 0\) on \(U(p)\) and hence
\(a_{2i} \geq 0\) on \(T_pM\) because \(U(p)\) is dense in \(T_pM\). This
establishes~\eqref{eq:min_pol_pos_def}. In particular, if \(k\) is even, then
\[
  a_k(X)
  = \lambda_1(X)^2\cdot\cdots \cdot\lambda_{k/2}(X)^2
  > 0
\]
on \(U(p)\). Hence \(\Ric|_p = 0\) by
Theorem~\ref{th:main_addition}~(b), which finishes the proof
of Corollary~\ref{co:main_addition}.
\end{proof}

\section{Proof of Theorem~\ref{th:main_global}}
\label{se:further_properties}
\begin{proof}
By the explicit construction in the proof of Lemma~\ref{le:rational}, the
pointwise-defined coefficients \(\bigl.a_i\bigr|_p\) of the minimal
polynomial \(P_{\min}(M,g,p)\) fit together smoothly to define the polynomial
\[
  P_{\min}\bigl(U(M),g|_{U(M)}\bigr)
  =
  \sum_{i=0}^k a_i\,\lambda^{k-i}
  \in \Gamma\bigl(\scrP(U(M))\bigr)[\lambda]
\]
such that
\[
  \sum_{i=0}^k a_i\,\symR^{k-i} = 0
  \qquad \text{on } U(M)
\]

Applying \(\nabla_X\) and evaluating at \(p \in U\) and \(X \in T_pM\)
\begin{align*}
  0
  &=
  \nabla_X \biggl(\sum_{i=0}^k a_i\,\symR^{k-i}\biggr)\bigg|_p(X) \\
  &=
  \sum_{i=0}^k \bigl(\nabla_X(a_i\,\symR^{k-i})\bigr)\big|_p(X) \\
  &=
  \sum_{i=0}^k
  (\nabla_X a_i)\big|_p(X)\,\symR^{k-i}\big|_p(X)
  +
  \sum_{i=0}^k
  a_i\big|_p(X)\,
  \underbrace{(\nabla_X \symR^{k-i})\big|_p(X)}_{=\symR^{k-i+1}\big|_p(X)}
\end{align*}
by the product rule. Hence
\[
  \sum_{i=0}^k
  (\nabla_X a_i)\big|_p(X)\,\symR^{k-i}\big|_p(X)
  =
  -\sum_{i=0}^k
  a_i\big|_p(X)\,\symR^{k-i+1}\big|_p(X)
\]

On the other hand,
\[
  \lambda \sum_{i=0}^k a_i\big|_p(X)\,\symR^{k-i}\big|_p(X) = \sum_{i=0}^k a_i\big|_p(X)\,\symR^{k-i+1}\big|_p(X) = 0
\]
because \(\lambda P_\min(M,g,p) \in \Kern(\Eval_p)\)
by the ideal property, cf. Corollary~\ref{co:C_0_ideal}. Also,
\(a_0 = 1\). Therefore,
\[
  \sum_{i=1}^k
  (\nabla_X a_i)\big|_p(X)\,\symR^{k-i}\big|_p(X) = 0
\]

Since
\(\{\symR^0|_p(X),\symR^1|_p(X),\ldots,\symR^{k-1}|_p(X)\}\)
is linearly independent for \(X \in U(p)\), we conclude that
\[
  (\nabla_X a_i)\big|_p(X) = 0
\]
for \(i=1,\ldots,k\). Because \(U(p)\) is dense in \(T_pM\), it follows
that
\[
  (\nabla_X a_i)\big|_p(X) \equiv 0
\]

\medskip

Now assume that \((M,g)\) is real-analytic.
Since the Killing equation is of finite type, Killing tensors
correspond bijectively to parallel sections of a naturally defined
vector bundle \((\bbE,\nabla^{\bbE})\) over \(M\), called the
\emph{prolongation bundle} of the Killing equation.
In the real-analytic case, this bundle and its connection are also
real-analytic.

If \(U \subseteq M\) is nonempty and open, and
\(\psi \in \Gamma(\bbE|_U)\) is a parallel section, then—if
\(M\) is simply connected—\(\psi\) extends uniquely to a global
parallel section of \(\bbE\) by analytic continuation.
Thus every local Killing tensor on a simply connected, real
analytic Riemannian manifold \((M,g)\) extends uniquely to a
global Killing tensor, and in particular the coefficients \(a_i\)
define global Killing tensors on \((M,g)\).
\end{proof}

\section{Proof of Theorem~\ref{th:Singer_invariant}}
\label{se:proof_of_theorem_Sin_inv}

Returning to the algebraic setup from
Section~\ref{se:C_0_space}, let \(V\) and \(W\) be arbitrary
finite-dimensional real vector spaces, and let
\[
  \symR = (\symR^0,\symR^1,\ldots) \in \scrP(V;W)^{\N_0}
\]
be a sequence of \(W\)-valued polynomial functions.

Let
\[
  P_{\min}(\symR) = \lambda^k + \sum_{i=1}^k a_i\,\lambda^{k-i}
  \in \scrF(V)[\lambda]
\]
denote the polynomial from~\eqref{eq:algebraic_minimal_polynomial_2}.
In the geometric application below, Theorem~\ref{th:main} ensures that
all coefficients lie in \(\scrP(V)\). Accordingly, throughout this
section we restrict attention to the case
\[
  P_{\min}(\symR) \in \scrP(V)[\lambda]
\]

Suppose that \(W\) carries a representation
\[
  \rho\colon \GL(V) \to \GL(W)
\]
and let \(\GL(V)\) act on \(\scrP(V;W)\) by
\begin{equation}\label{eq:Lie_group_action}
  (F \cdot \Phi)(X) \coloneqq \rho(F)\bigl(\Phi(F^{-1}X)\bigr)
\end{equation}
for \(\Phi \in \scrP(V;W)\). This induces a diagonal action on
sequences in \(\scrP(V;W)^{\N_0}\). The stabilizer of the truncated
sequence
\[
  \symR^{\le k} \coloneqq (\symR^0,\symR^1,\ldots,\symR^k)
\]
in \(\GL(V)\) is
\begin{equation}\label{eq:j-te_Lie_gruppe_alternativ}
  G(\symR^{\le k})
  \coloneqq
  \{\,F \in \GL(V) \mid
      F \cdot \symR^i = \symR^i \text{ for } i=0,\ldots,k
    \,\}
\end{equation}
Moreover, \(\GL(V)\) acts on \(\scrP(V)\) by pullback:
\[
  (F\cdot a)(X) \coloneqq a(F^{-1}X)
\]
If \(P = \sum_j a_j\lambda^j \in \scrP(V)[\lambda]\), we define
\[
  F \cdot P \coloneqq \sum_j (F\cdot a_j)\lambda^j
\]

\begin{lemma}\label{le:Sin_inv}
Each coefficient \(a_i\) of \(P_{\min}(\symR)\) is invariant under the
induced action:
\[
  F\cdot a_i = a_i
\]
for all \(F \in G(\symR^{\le k})\) and \(i=1,\ldots,k\)
\end{lemma}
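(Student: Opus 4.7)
The plan is to apply $F \in G(\scrR^{\le k})$ to the defining relation of the minimal polynomial and then invoke the uniqueness statement in Lemma~\ref{le:rational}. By construction of $P_{\min}(\scrR)$, the coefficients $a_1,\ldots,a_k \in \scrF(V)$ are the unique rational functions such that
\[
  \scrR^k \;=\; -\sum_{i=1}^k a_i\,\scrR^{k-i}
\]
holds in \(\scrF(V;W)\). The action \eqref{eq:Lie_group_action} of $\GL(V)$ extends naturally from polynomial to rational $W$-valued functions by pullback (and pointwise application of $\rho(F)$), and a direct computation from the definitions gives the twisted compatibility
\[
  F\cdot(a\,\scrS) \;=\; (F\cdot a)(F\cdot\scrS)
\]
for $a \in \scrF(V)$ and $\scrS \in \scrF(V;W)$.

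First I would verify this last identity carefully and note that, together with $\bbR$-linearity of the action, it allows us to apply $F$ termwise to the defining relation. This yields
\[
  F\cdot\scrR^k \;=\; -\sum_{i=1}^k (F\cdot a_i)\,(F\cdot \scrR^{k-i}) .
\]
Next I would use the assumption $F \in G(\scrR^{\le k})$, i.e.\ $F\cdot\scrR^j = \scrR^j$ for $j = 0,\ldots,k$, to rewrite the previous identity as
\[
  \scrR^k \;=\; -\sum_{i=1}^k (F\cdot a_i)\,\scrR^{k-i}
\]
in $\scrF(V;W)$. Finally, by the uniqueness assertion of Lemma~\ref{le:rational}, the coefficients in any such relation of degree $k$ are uniquely determined; hence $F\cdot a_i = a_i$ for $i = 1,\ldots,k$, which is exactly the claim.

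The argument is essentially formal once the compatibility of the $\GL(V)$-action with the $\scrF(V)$-module structure on $\scrF(V;W)$ is in place. The only subtle point I would be cautious about is precisely this compatibility: one must check that extending $F \cdot \scrR$ from $\scrP(V;W)$ to $\scrF(V;W)$ is well-defined (this is immediate since the action preserves denominators up to pullback), and that $F$ commutes with scalar multiplication in the twisted sense above. Everything else reduces to the uniqueness of the minimal polynomial established in Lemma~\ref{le:rational}, so no further geometric or analytic input is needed.
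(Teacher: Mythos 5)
Your proposal is correct and follows essentially the same route as the paper: apply \(F\) termwise to the relation \(\scrR^k = -\sum_i a_i\,\scrR^{k-i}\), use \(F\cdot\scrR^j = \scrR^j\) for \(j\le k\), and conclude by the uniqueness of the coefficients from Lemma~\ref{le:rational}. The paper phrases the same computation through the equivariance of \(\Eval_\scrR\), but the substance is identical.
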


\begin{proof}
  Let \(F \in G(\symR^{\le k})\) and set \(P_{\min} \coloneq P_{\min}(\symR)\).
  Then \(F \cdot \symR^i = \symR^i\) for
\(i=0,\ldots,k\). By~\eqref{eq:algebraic_minimal_polynomial_1}, we have
\[
  \Eval_{\symR}(P_{\min}) = 0
\]
Applying \(F\cdot\) and using the equivariance relation
\[
  F\cdot \Eval_{\symR}(P)
  =
  \Eval_{F\cdot\symR}(F\cdot P)
\]
for \(P \in \scrP(V)[\lambda]\), we obtain
\[
  0
  =
  F\cdot \Eval_{\symR}(P_{\min})
  =
  \Eval_{F\cdot\symR}(F\cdot P_{\min})
\]
Since \(F \cdot \symR^i = \symR^i\) for \(i=0,\ldots,k\) and
\(\deg(P_{\min}) = k\), it follows that
\[
  \Eval_{\symR}(F\cdot P_{\min}) = 0
\]
Thus \(F\cdot P_{\min}\) is a monic polynomial of degree \(k\) in
\(\scrP(V)[\lambda]\) annihilating \(\symR\). By uniqueness of
\(P_{\min}(\symR)\), we obtain
\[
  F\cdot P_{\min} = P_{\min}
\]
and therefore \(F\cdot a_i = a_i\) for \(i=1,\ldots,k\).
\end{proof}

\begin{corollary}\label{co:Sin_inv}
Suppose that \(\Kern(\Eval_{\symR})\) is an ideal of
\(\scrP(V)[\lambda]\). Then
\[
  G(\symR^{\le k}) = G(\symR^{\le k+1})
\]
\end{corollary}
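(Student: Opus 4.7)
The plan is to prove both inclusions in the equality $G(\scrR^{\le k}) = G(\scrR^{\le k+1})$. The inclusion $G(\scrR^{\le k+1}) \subseteq G(\scrR^{\le k})$ is immediate from the defining equation \eqref{eq:j-te_Lie_gruppe_alternativ}. For the reverse inclusion I would fix an arbitrary $F \in G(\scrR^{\le k})$ and aim to show $F \cdot \scrR^{k+1} = \scrR^{k+1}$.

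The key leverage comes from the ideal hypothesis. Since $\Kern(\Eval_{\scrR})$ is closed under multiplication by $\lambda$, the product $\lambda \cdot P_{\min}(\scrR)$ also lies in $\Kern(\Eval_{\scrR})$. Combined with Lemma~\ref{le:algebraic_lemma}, which upgrades the coefficients $a_i$ of $P_{\min}(\scrR)$ from rational to polynomial, this yields the identity
\[
  \scrR^{k+1} \;=\; -\sum_{i=1}^k a_i\,\scrR^{k+1-i} \qquad \text{in } \scrP(V;W).
\]
Crucially, each of the terms $\scrR^{k+1-i}$ appearing on the right has index $1 \le k+1-i \le k$, and so is fixed by $F$.

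Next I would invoke Lemma~\ref{le:Sin_inv} (whose hypothesis is satisfied since $\Kern(\Eval_{\scrR})$ is an ideal) to conclude that $F \cdot a_i = a_i$ for every $i = 1,\ldots,k$. A direct termwise check from the definition \eqref{eq:Lie_group_action} together with the pullback action on $\scrF(V)$ establishes the equivariance $F \cdot (a \cdot S) = (F \cdot a) \cdot (F \cdot S)$ for $a \in \scrF(V)$ and $S \in \scrF(V;W)$. Applying $F \cdot$ to the displayed identity and using that $F$ fixes both the coefficients $a_i$ and the endomorphism-valued polynomials $\scrR^{k+1-i}$ for $1 \le i \le k$, we obtain
\[
  F \cdot \scrR^{k+1} \;=\; -\sum_{i=1}^k (F \cdot a_i)\,(F \cdot \scrR^{k+1-i}) \;=\; -\sum_{i=1}^k a_i\,\scrR^{k+1-i} \;=\; \scrR^{k+1},
\]
which puts $F$ into $G(\scrR^{\le k+1})$ and completes the argument.

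There is no serious obstacle: the corollary is a clean consequence of the two already-established facts, namely that the ideal property propagates the minimal recursion one step further (from index $k$ to $k+1$) and that the coefficients of $P_{\min}(\scrR)$ are $G(\scrR^{\le k})$-invariant. The only point requiring a line of verification is the equivariance of the $\GL(V)$-action with respect to scalar multiplication by elements of $\scrF(V)$, which is immediate from unwinding the pullback definitions.
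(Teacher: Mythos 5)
Your argument is correct and follows essentially the same route as the paper: use the ideal hypothesis to push the minimal relation one index up, then invoke Lemma~\ref{le:Sin_inv} together with the equivariance of the $\GL(V)$-action to see that $F$ fixes the right-hand side and hence $\scrR^{k+1}$. One small inaccuracy worth noting: Lemma~\ref{le:Sin_inv} has no ideal hypothesis (it works in $\scrF(V)[\lambda]$ unconditionally), so your parenthetical "(whose hypothesis is satisfied since $\Kern(\Eval_\scrR)$ is an ideal)" is unnecessary.
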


\begin{proof}
The inclusion
\[
  G(\symR^{\le k+1}) \subseteq G(\symR^{\le k})
\]
is obvious, so it suffices to show the reverse inclusion.

Set \(P_{\min} \coloneqq P_{\min}(\symR)\). Since
\(\Kern(\Eval_{\symR})\) is an ideal of \(\scrP(V)[\lambda]\), we have
in particular
\[
  \lambda P_{\min} \in \Kern(\Eval_{\symR})
\]
Applying \(\Eval_{\symR}\) to \(\lambda P_{\min}\), we obtain
\[
  \symR^{k+1} = -\sum_{i=1}^k a_i\,\symR^{k+1-i}
\]
Now let \(F \in G(\symR^{\le k})\). By Lemma~\ref{le:Sin_inv}, each
coefficient \(a_i\) is \(F\)-invariant, and by definition of
\(G(\symR^{\le k})\), each \(\symR^{k+1-i}\) is \(F\)-invariant for
\(i=1,\ldots,k\). Hence the right-hand side is \(F\)-invariant, and so
\(\symR^{k+1}\) is \(F\)-invariant as well. Therefore
\[
  G(\symR^{\le k}) \subseteq G(\symR^{\le k+1})
\]
This completes the proof.
\end{proof}

\paragraph{Proof of Theorem~\ref{th:Singer_invariant}}
Fix \(p \in M\) and let \(k\) denote the degree of the minimal
polynomial \(P_{\min}(M,g,p)\). Put \(V \coloneqq T_pM\),
\(W \coloneqq \End{T_pM}\), and let \(\rho\) denote
the conjugation representation
\[
  \rho \colon \GL(V) \longrightarrow \GL(W),
  \quad
  F \longmapsto \Ad(F),
  \qquad
  \Ad(F)(A) \coloneqq F A F^{-1}
\]
Under the identification of \(\symR^i|_p\) with an
\(W\)-valued polynomial function on \(V\), the natural
\(\SO(V)\)-action by pullback corresponds to the action \(F\cdot\)
in~\eqref{eq:Lie_group_action} with \(\rho = \Ad\).

Hence, on the one hand, by the classical jet isomorphism theorem
(cf.~\cite[Theorem~3]{J1}), the stabilizer in \(\SO(V)\) of the
\(k\)th-order jet \(\nabla^{\le k}\rmR|_p\) of the curvature tensor
\begin{equation}\label{eq:Lie_group_k}
  G(k)
  =
  \{\,F \in \SO(V)  \mid
      F^* \nabla^i \rmR|_p = \nabla^i \rmR|_p \text{ for } i=0,\ldots,k
    \,\}
\end{equation}
coincides with the stabilizer of the symmetrized \(k\)-jet
\(\symR^{\le k}|_p\)
\begin{equation}\label{eq:Lie_group_k_alternativ}
  G(k)
  =
  \{\,F \in \SO(V)  \mid
      F \cdot \symR^i|_p = \symR^i|_p \text{ for } i=0,\ldots,k
    \,\}
\end{equation}

On the other hand, Corollary~\ref{co:C_0_ideal} implies
that \(\Kern(\Eval_p)\) is an ideal of \(\scrP(V)[\lambda]\).
Hence, by Corollary~\ref{co:Sin_inv}, we have
\[
  G_{\GL}(k) = G_{\GL}(k+1)
\]
Since \(G(k) = G_{\GL}(k) \cap \SO(V) \) by
\eqref{eq:Lie_group_k_alternativ}, we conclude that
\[
  G(k) = G(k+1)
\]
In particular, \(\frakg(k) = \frakg(k+1)\). Hence
\[
  k_{\mathrm{Singer}} \le k
\]
\qed

\appendix
\section{Existence of admissible polynomials on compact real-analytic
Riemannian manifolds}\label{se:JR}


\begin{lemma} \label{le:admissible}
Let \((M,g)\) be a Riemannian manifold with Levi-Civita connection
\(\nabla\) and curvature tensor \(\rmR\). For each \(p \in M\) there exist
some \(k > 0\) and symmetric tensors \(a_i \in \scrP^i(T_pM)\) for
\(i = 1,\ldots,k\) such that \eqref{eq:admissible_in_p} holds at \(p\).
\end{lemma}

\begin{proof}
  Recall that \(\scrP(T_pM;\End{T_pM})\) is a free \(\scrP(T_pM)\)-module of
  rank \(n\times n\). In particular,
\(\scrP(T_pM;\End{T_pM})\) satisfies the ascending chain condition.
It follows that there exist \(k > 0\) and \(a_i \in \scrP(T_pM)\) such that
\[
  \symR^k|_p = -a_1\,\symR^{k-1}|_p - a_2\,\symR^{k-2}|_p
      - \cdots - a_k\,\symR^0|_p
\]
cf. the proof of Lemma~\ref{le:principal_ideal}. Since \(\scrP(T_pM;\End{T_pM})\) is
a graded \(\scrP(T_pM)\)-module and \(\symR^k\in \scrP^{k+2}(T_pM;\End{T_pM})\),
we can furthermore assume that \(a_i \in \scrP^i(T_pM)\).
\end{proof}

For a real-analytic metric, the local existence of admissible polynomials
follows from:

\begin{proposition}[\cite{ZS}, Proof of Theorem~5, p.~148]\label{p:ZaSa}
The ring of locally convergent formal power series \(\bbR\{X_1,\ldots,X_n\}\)
in \(n\) variables is Noetherian.
\end{proposition}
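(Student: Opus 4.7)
The plan is to prove this by induction on $n$, in direct analogy with Hilbert's basis theorem for polynomial rings, but with Weierstrass division in place of ordinary polynomial long division. The base case $n = 0$ is trivial since $\bbR$ is a field. For the inductive step, set $R \coloneqq \bbR\{X_1,\ldots,X_{n-1}\}$ and assume that $R$ is Noetherian; we must show that an arbitrary ideal $I \subseteq \bbR\{X_1,\ldots,X_n\}$ is finitely generated.

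The key analytic input is the Weierstrass preparation and division theorems for convergent power series. Recall that $f \in \bbR\{X_1,\ldots,X_n\}$ is \emph{regular in $X_n$ of order $d$} if $f(0,\ldots,0,X_n) = c X_n^d + O(X_n^{d+1})$ with $c \neq 0$. For such an $f$, Weierstrass preparation gives a unique factorization $f = uW$ with $u$ a unit in $\bbR\{X_1,\ldots,X_n\}$ and $W \in R[X_n]$ a Weierstrass polynomial of degree $d$ (monic in $X_n$, remaining coefficients in the maximal ideal of $R$). The Weierstrass division theorem asserts, moreover, that every $g \in \bbR\{X_1,\ldots,X_n\}$ admits a unique decomposition $g = qW + r$ with $q \in \bbR\{X_1,\ldots,X_n\}$ and $r \in R[X_n]$ of $X_n$-degree strictly less than $d$.

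Given a non-zero ideal $I$, pick $0 \neq f \in I$. After a generic linear change of the variables $X_1,\ldots,X_n$, which leaves $\bbR\{X_1,\ldots,X_n\}$ invariant, $f$ becomes regular in $X_n$; this is standard, since the lowest-degree homogeneous component of $f$ is a non-zero polynomial and therefore fails to vanish identically along a generic coordinate direction. Weierstrass preparation then produces a Weierstrass polynomial $W \in I$ of some degree $d$, and for every $g \in I$, Weierstrass division yields $g = qW + r$ with $r = g - qW \in I \cap R[X_n]_{<d}$, where $R[X_n]_{<d}$ denotes the free $R$-module of $X_n$-polynomials of degree below $d$. Consequently
\[
  I \;=\; (W) \;+\; \bigl(I \cap R[X_n]_{<d}\bigr)
\]

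By the induction hypothesis $R$ is Noetherian, so the finitely generated free $R$-module $R[X_n]_{<d}$ satisfies the ascending chain condition on submodules. Its submodule $I \cap R[X_n]_{<d}$ is therefore finitely generated over $R$, say by $r_1,\ldots,r_N$; together with $W$, these furnish a finite generating set for $I$ over $\bbR\{X_1,\ldots,X_n\}$. The main obstacle is not this bookkeeping but the Weierstrass theorems themselves, whose proofs rest on delicate convergence estimates for the formal quotient and remainder series; it is precisely at this step that the convergence hypothesis built into $\bbR\{X_1,\ldots,X_n\}$—as opposed to the larger ring of formal power series, where the same induction would otherwise run verbatim—enters in an essential way.
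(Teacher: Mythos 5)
Your proof is correct and is exactly the Weierstrass-preparation argument used in the reference the paper cites for this statement (Zariski--Samuel, Vol.~II, Ch.~VII, Theorem~5); the paper itself gives no proof and simply points to that source. The inductive reduction to the ideal $I \cap R[X_n]_{<d}$ via Weierstrass division, with Noetherianity of the free $R$-module $R[X_n]_{<d}$ doing the rest, is the standard route, and you correctly identify the convergence estimates in the Weierstrass theorems as the point where $\bbR\{X_1,\ldots,X_n\}$ behaves well.
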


Here, a formal power series in \(X_1,\ldots,X_n\) is called locally
convergent if its radius of convergence at the origin is positive.

\begin{corollary} \label{co:JR_loc_analytic}
Let \((M,g)\) be a real-analytic Riemannian manifold. For each
\(p \in M\) there exist an open neighborhood \(U \subseteq M\) of \(p\) and
some \(P \in \Gamma(\scrP(U))[\lambda]\) that is admissible on \(U\).
\end{corollary}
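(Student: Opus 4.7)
The plan is to mimic the single-point argument of Lemma~\ref{le:admissible_existence}, but with the coefficient ring $\bbR$ replaced by the Noetherian ring of locally convergent power series $R_p\coloneqq\bbR\{x_1,\ldots,x_n\}$ provided by Proposition~\ref{p:ZaSa}. First I would choose a real analytic chart $(x_1,\ldots,x_n)$ centered at $p$, together with the associated analytic trivialization of $TU$. Real analyticity of $g$ propagates to $\nabla$, $\rmR$ and hence to each $\scrR^i$, so in this trivialization $\scrR^i$ is represented by an $n\times n$ matrix whose entries are polynomials of total degree $i+2$ in the fiber variables $X_1,\ldots,X_n$ with coefficients that are convergent power series in $x_1,\ldots,x_n$ near the origin.

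Next, set $R\coloneqq R_p[X_1,\ldots,X_n]$. By Proposition~\ref{p:ZaSa} together with Hilbert's basis theorem, $R$ is Noetherian, and therefore the free $R$-module $\mathrm{Mat}_{n\times n}(R)$ of $n\times n$ matrices over $R$, which is the natural home of the matrix representatives of the $\scrR^i$, is Noetherian. I would then consider the ascending chain
\[
  N_0 \;\subseteq\; N_1 \;\subseteq\; N_2 \;\subseteq\; \cdots,
  \qquad
  N_j \coloneqq R\,\scrR^0 + R\,\scrR^1 + \cdots + R\,\scrR^j,
\]
which must stabilize. Hence there exists $k\ge 1$ with $\scrR^k\in N_{k-1}$, yielding elements $b_i\in R$ such that $\scrR^k = -\sum_{i=1}^k b_i\,\scrR^{k-i}$ as an identity in $\mathrm{Mat}_{n\times n}(R)$.

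To recover the required homogeneity in the fiber variables, I would take the degree-$(k+2)$ component in $X_1,\ldots,X_n$ on both sides. Since $\scrR^j$ is homogeneous of degree $j+2$ in $X$, this operation replaces each $b_i$ by its degree-$i$ homogeneous part $a_i\in R_p[X_1,\ldots,X_n]$ while leaving the left-hand side unchanged. Collectively the $a_i$ involve only finitely many convergent power series from $R_p$; after shrinking the chart to a neighborhood $U$ on which all of them converge simultaneously (which is automatic for a finite family), the data $(a_1,\ldots,a_k)$ determines analytic sections $a_i\in\Gamma(\scrP^i(U))$, and $P\coloneqq\lambda^k+\sum_{i=1}^k a_i\,\lambda^{k-i}\in\Gamma(\scrP(U))[\lambda]$ is monic homogeneous of degree $k$. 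Pointwise evaluation of the matrix identity at any $q\in U$ yields \eqref{eq:admissible_in_p} at $q$, so $P$ is admissible on $U$.

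The main obstacle I expect is administrative rather than substantial: translating the abstract Noetherian statement in the ring $R$ into a geometric statement over an honest common neighborhood $U$. This is harmless because only finitely many convergent power series appear in the coefficients of $a_1,\ldots,a_k$, so a common positive polyradius of convergence exists and $U$ can be chosen accordingly. The homogenization step preserves monicity because $\scrR^k$ is already homogeneous of degree $k+2$, so its leading contribution to $P$ survives the truncation; this is what permits us to conclude admissibility in the strict sense of Definition~\ref{de:admissible_in_p}.
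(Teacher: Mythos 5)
Your argument is correct and follows the same route as the paper: use Proposition~\ref{p:ZaSa} together with Hilbert's basis theorem to conclude that the ring of locally convergent power series in the base variables with polynomial coefficients in the fiber variables is Noetherian, and then run the ascending-chain argument of Lemma~\ref{le:admissible_existence} in a finitely generated module over this ring. The only difference is that you make the homogenization step (extracting the degree-$(k+2)$ fiber component to force $a_i \in \scrP^i$) and the shrink-to-a-common-neighborhood step explicit, whereas the paper handles both implicitly by working with germs and appealing directly to the grading.
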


\begin{proof}
Choosing a local analytic frame field \(E_1,\ldots,E_n\) at \(p\), the ring
of germs of real-analytic sections of \(\scrP(M)\) at \(p\) is isomorphic
to the ring \(\bbR\{X_1,\ldots,X_n\}[Y_1,\ldots,Y_n]\) of polynomials in
\(Y_1,\ldots,Y_n\) with coefficients in \(\bbR\{X_1,\ldots,X_n\}\).
Proposition~\ref{p:ZaSa}, combined with Hilbert's basis theorem, implies
that this ring is Noetherian. Hence the ring of germs of analytic sections
of \(\scrP(M)\) at \(p\) is Noetherian. Let \(\{F^i\}\) be the dual basis of
\(\{E_i\}\). Then germs of real-analytic sections of
\(\scrP(M) \otimes \End{TM}\) can be uniquely written as
\(\sum_{i,j} c_{i,j} \otimes E_i \otimes F^j\) with coefficients \(c_{i,j}\)
in the ring of germs of real-analytic sections of \(\scrP(M)\) at \(p\).
Thus, the module of germs of sections of \(\scrP(M) \otimes \End{TM}\) at
\(p\) is finitely generated over the ring of germs of sections of
\(\scrP(M)\). Therefore, the same algebraic argument as in
Lemma~\ref{le:admissible} shows that there exist an open
neighborhood \(U\) of \(p\) and real-analytic sections \(a_i\) of
\(\scrP^i(U)\) for \(i=1,\ldots,k\) such that
\begin{equation} \label{eq:JR_on_U}
  \symR^k|_U = -\sum_{i=1}^k a_i\,\symR^{k-i}|_U
\end{equation}
Consequently, \(P \coloneqq \lambda^k + \sum_{i=1}^k a_i\,\lambda^{k-i}
\in \Gamma(\scrP(U))[\lambda]\) is admissible on \(U\).
\end{proof}

\paragraph{Proof of Theorem~\ref{th:globally_admissible}}
By Corollary~\ref{co:JR_loc_analytic}, there exists a finite open covering
\(\{U_j\}_{j=1,\ldots,N}\) of \(M\) such that for each \(j\) there is a
\(P_j \in \Gamma(\scrP(U_j))[\lambda]\) admissible on \(U_j\). Let \(k_j\)
denote the degree of \(P_j\), and define
\(k_{\mathrm{max}} \coloneqq \max\{k_j\}_{j=1,\ldots,N}\). By covariantly
differentiating and symmetrizing \eqref{eq:JR_on_U}, each \(P_j\) generates
an admissible polynomial of order \(k_j+1\), and this process continues
iteratively. Thus, we may assume that all \(P_j\) are monic and
homogeneous of the same degree \(k \coloneqq k_{\mathrm{max}}\):
\[
  P_j = \sum_{i=0}^k a_{i,j}\,\lambda^{k-i}  \quad\text{with}\quad
  a_{i,j} \in \Gamma(\scrP^i(U_j)), \quad a_{0,j} = 1
\]
Moreover, there exists a smooth partition \(\{\mu_j\}_{j=1,\ldots,N}\) of
unity subordinate to the open cover \(\{U_j\}\). That is, each
\(\mu_j \colon M \to [0,1]\) is smooth with compact support in \(U_j\) and
\(\sum_{j=1}^N \mu_j = 1\). Then
\begin{align*}
  \symR^k
    &= \left( \sum_{j=1}^N \mu_j \right)\,\symR^k
      = -\sum_{j=1}^N \mu_j
        \left( \sum_{i=1}^k a_{i,j}\,\symR^{k-i} \right) \\
    &= -\sum_{i=1}^k \left( \sum_{j=1}^N \mu_j\,a_{i,j} \right)
        \symR^{k-i}
      = -\sum_{i=1}^k a_i\,\symR^{k-i}
\end{align*}
with smooth coefficients \(a_i \coloneqq \sum_{j=1}^N \mu_j\,a_{i,j}
\in \Gamma(\scrP^i(M))\). Therefore,
\(P \coloneqq \lambda^k + \sum_{i=1}^k a_i\,\lambda^{k-i}
\in \Gamma(\scrP(M))[\lambda]\) is admissible. \qed

\section*{Acknowledgements}
I am grateful to Antonio Naveira, who introduced me to these ideas during
his visit to Stuttgart in 2013. I was deeply saddened to learn of his
passing in October 2021. I would also like to thank Gregor Weingart for
helpful suggestions.


\bibliographystyle{amsplain}

\end{document}